\documentclass[reqno]{amsart}


\numberwithin{equation}{section}

\newtheorem{Theorem}{Theorem}[section]
\newtheorem{Definition}[Theorem]{Definition}
\newtheorem{Corollary}[Theorem]{Corollary}
\newtheorem{Lemma}[Theorem]{Lemma}
\newtheorem{Proposition}[Theorem]{Proposition}
\newtheorem{Remark}[Theorem]{Remark}
\newtheorem{Example}[Theorem]{Example}

\newcommand{\<}{\langle}
\renewcommand{\>}{\rangle}
\newcommand{\R}{\mathbb{R}}

\newcommand{\xii}{|\xi|}

\newcommand{\lloc}{\mathrm{\,loc\,}}
\newcommand{\sign}{\mathrm{\,sign\,}}

\title[A test function method for fractional Laplacians]{A test function method for evolution equations with fractional powers of the Laplace operator}
\author{M. D'Abbicco and  K. Fujiwara}

\address{Marcello D'Abbicco, Dept. of Mathematics, University of Bari, Via E. Orabona 4 - 70125 BARI - ITALY}
\address{Kazumasa Fujiwara, Mathematical Institute, Tohoku University, 6-3 Aoba, Aramaki, Aoba-ku, Sendai, Miyagi, 980-8578 Japan}


\begin{document}

\begin{abstract}
In this paper, we discuss a test function method to obtain nonexistence of global-in-time solutions for higher order evolution equations with fractional derivatives and a power nonlinearity, under a sign condition on the initial data. In order to deal with fractional powers of the Laplace operator, we introduce a suitable test function and a suitable class of weak solutions. The optimality of the nonexistence result provided is guaranteed by both scaling arguments and counterexamples. In particular, our manuscript provides the counterpart of nonexistence for several recent results of global existence of small data solutions to the following problem:
\[\begin{cases}
u_{tt} + (-\Delta)^{\theta}u_t + (-\Delta)^{\sigma} u = f(u,u_t), & t>0, \ x\in\R^n,\\
u(0,x)=u_0(x), \ u_t(0,x)=u_1(x),
\end{cases}\]
with~$f=|u|^p$ or~$f=|u_t|^p$, where~$\theta\geq0$ and $\sigma>0$ are fractional powers.
\end{abstract}

\keywords{nonlinear evolution equations, critical exponent, global-in-time solutions, fractional Laplacian, test function method}

\subjclass[2010]{35B33, 35G25, 35L71}

\maketitle

\section{Introduction}
In this paper, we investigate the critical exponent for the nonexistence of global solutions to evolution equations with fractional spatial derivatives and a source power nonlinearity.

We consider a linear differential operator
\begin{equation}\label{eq:L}
L= \partial_t^m + \sum_{j=0}^{m-1} A_j \partial_t^{j},\quad \text{where}\ A_j=a_j(-\Delta)^{\sigma_j},
\end{equation}
with~$m\ge1$, $a_j\in\R$, and~$\sigma_j\geq0$. Here~$(-\Delta)^{\sigma_j}$ represents a (possibly) fractional power of the Laplace operator (see later, Definition~\ref{def:Laplaceop}). We assume that at least one among~$A_j$, with~$j=0,\ldots,m-1$ is a non-integer power of the Laplace operator.

We are interested into find nonexistence results for global-in-time solutions to the Cauchy problem
\begin{equation}
\begin{cases}
Lu = |\partial_t^\ell u|^p, & t>0, \ x\in\R^n,\\
\partial_t^j u(0,x)= u_j(x), & j=0,\ldots,m-1.
\end{cases}
\label{eq:CP}
\end{equation}
In the evolution equation, a source power nonlinearity~$|\partial_t^\ell u|^p$ appears, with~$p>1$, and $\ell$ an integer between $0$ and $m-1$.

Our interest is in a first moment motivated by recent global-in-time existence results derived for evolution equations with structural damping
\begin{equation}
\begin{cases}
u_{tt} + (-\Delta)^{\sigma_1}u_t + (-\Delta)^{\sigma_0} u = f(u,u_t), & t>0, \ x\in\R^n,\\
u(0,x)=u_0(x), \ u_t(0,x)=u_1(x).
\end{cases}
\label{eq:structev}
\end{equation}
In~\cite{DAE17, DAE20}, global existence in time of small data solutions to~\eqref{eq:structev} in low space dimension is proved in the case~$0<\sigma_1<\sigma_0$ for supercritical powers~$p>p_0$ or, respectively, $p>p_1$, when a power nonlinearity~$|f|\approx|u|^p$ or, respectively, $|f|\approx|u_t|^p$ is considered. Here
\[ p_0 = 1 + \frac{2\sigma_0}{n-2\sigma_1},\qquad p_1 = 1+\frac{2\sigma_1}n, \]
if~$2\sigma_1\leq\sigma_0$, whereas
\[ p_0 = 1 + \frac{2\sigma_0}{n-\sigma_0},\qquad p_1 = 1+\frac{\sigma_0}n, \]
if~$2\sigma_1\geq\sigma_0>1$. Also, in~\cite{EL}, global existence in time of small data solutions to
\[
\begin{cases}
u_{tt} + (-\Delta)^{\sigma} u = |u|^p, & t>0, \ x\in\R^n,\\
u(0,x)=u_0(x), \ u_t(0,x)=u_1(x).
\end{cases}
\]
is proved for~$p>1+2\sigma/(n-\sigma)$ when~$\sigma>1$ in low space dimension.

The nonexistence counterpart of these existence results is derived by using a classic test function method,
under the assumption that all powers of the Laplace operator are integer.
That assumption of integer powers is motivated by the inefficacy of the classical test function method,
when dealing with fractional differential operators.
For the details of the classical test function method and related topics,
we refer the reader to~\cite{MP1, MP2, MP, MPhyp} (see also~\cite{DAL03, DAL13,FO16,II19,IO16,LNZ12, LZ19} and references therein). For the application of a modified test function method to Cauchy problems with fractional derivatives in time and classical derivatives in space we address the reader to~\cite{DAcap}. For an application of a test function method to nonlinearities of type~$\mu(|u|)|u|^{p_c}$, where~$\mu$ is a modulus of continuity, we refer to~\cite{EGR}.

In recent times, several authors investigated existence of global-in-time small data solutions for evolution equations with supercritical power nonlinearities, and the importance of having an instrument which provides a counterpart blowup or nonexistence argument for subcritical (and possibly critical) power nonlinearities is crucial. In particular, the counterpart of the global-in-time existence result for the classical damped wave equation~\cite{M76, TY01} ($\sigma_1=0$, $\sigma_0=1$ in~\eqref{eq:structev}) has been derived in the critical case in~\cite{Z01} by a simple application of the test function method (for the wave equation without damping, the methods are different and we address the reader to the classical results in~\cite{Georgiev, GLS, G, G2, J, Kato, Sc, Si, Strauss}, but this list is far from being exhaustive).

In this manuscript, we provide a positive answer to this problem,
constructing a suitable test function which behaves well when fractional Laplace operators are applied.
In particular, we refer to Examples~\ref{ex:Ebert}, \ref{ex:damp} and~\ref{ex:dampt}.
Moreover, with this construction,
we can obtain a nonexistence argument for a wide class of possibly higher order operators, as in~\eqref{eq:L}.

The non-existence of global-in-time weak positive solutions for $u_{t} + (-\Delta)^\sigma u = u^p$, with fractional values of~$\sigma$, has been proved in~\cite{FK10} for~$1<p\leq 1+2\sigma/n$,
using the classical test function method.
This was possible thanks to the pointwise control of fractional derivative derived by C\'ordoba and C\'ordoba
in \cite[Thorem 1]{CC03} and \cite[Proposition 2.3]{CC04},
\begin{equation}\label{eq:CC}
((-\Delta)^{s/2} \phi^2)(x)
\leq 2 \phi(x) (-\Delta)^{s/2} \phi(x)
\end{equation}
for any $x \in \mathbb R^n$, $0 < s < 2$, and $\phi \in \mathcal S$, nonnegative,
where $\mathcal S$ is the Schwartz class.
The same approach works for some damped evolution models,
$u_{tt}+2(-\Delta)^{\sigma/2}u_t+(-\Delta)^\sigma u=u^p$, see~\cite{DAR14, DKR}. In these models, a suitable sign assumption on the initial data was sufficient to guarantee the positivity of the solution, which was crucial to effectively employ estimate~\eqref{eq:CC} in the test function method.

On the other hand, positive solutions may not exist in general cases. We mention that estimates of type
\begin{align}
|((-\Delta)^{s/2} \phi^2)(x)|
\leq 2 | \phi(x) (-\Delta)^{s/2} \phi(x) |
\label{eq:PC}
\end{align}
for any $x \in \mathbb R^n$ with some $s > 0$ and $\phi \in C^\infty$ do not hold generally with compactly supported functions
because $(-\Delta)^{s/2}$ is non-local when $s$ is not even number.
In \cite{F18},
it is shown that
\eqref{eq:PC} holds for $0 < s < 2$ and $\phi(x) = \langle x \rangle^{-q}$
with some $q > 0$.

The second goal of this manuscript is to provide a constructive method which allows us to directly compute the critical exponents of nonexistence for operators as in~\eqref{eq:L}, simply knowing the fractional powers~$\sigma_j$ appearing in~\eqref{eq:L}. Namely, we find the best possible scaling which relates the time and space variable, for which a nonexistence result holds for~$Lu=|\partial_t^\ell u|^p$.

For the ease of reading, the paper is organized in sections. In each section, we discuss an aspect of the problem considered:
\begin{itemize}
\item in Section~\ref{sec:weaksol}, we give a suitable definition of test function, which fits our need to behave well with fractional Laplace operators, and consequently a definition of weak solution to~\eqref{eq:CP};
\item in Section~\ref{sec:fractLap}, we discuss the action of the fractional Laplacian operators on the test function considered;
\item in Section~\ref{sec:critical}, we discuss how the critical exponent for~\eqref{eq:CP} is obtained for different operators by a constructive method;
\item in Section~\ref{sec:proof}, we give the proof of Theorem~\ref{thm:main};
\item in Section~\ref{sec:examples}, we provide examples of the application of Theorem~\ref{thm:main}
to some equations;
\item in Section~\ref{sec:integer}, we briefly discuss the case of an equation with classical derivatives, for the ease of reference, showing the validity of the critical exponent constructed in Section~\ref{sec:critical}.
\end{itemize}

Having in mind that the questions and details about ``what is a weak solution'', ``how the fractional Laplacian operator act on the test function'', and ``how the critical exponent is derived'' are postponed, respectively, to Sections~\ref{sec:weaksol}, \ref{sec:fractLap} and~\ref{sec:critical}, we are ready to state our nonexistence results.

First of all, we construct the critical exponent for~\eqref{eq:CP}.
\begin{Definition}\label{def:homdim}
Let~$L$ be as in~\eqref{eq:L}. To uniform the notation, we denote~$a_m=1$ and~$\sigma_m=0$ (here~$(-\Delta)^0=\mathrm{Id}$, the identity operator), consistently with the notation
\[ L= \sum_{j=0}^{m} A_j \partial_t^{j},\quad \text{where}\ A_j=a_j(-\Delta)^{\sigma_j}. \]
For any~$\eta\in[0,\infty]$, we define the function
\[
g(\eta)= \min_{j=0,\ldots,m, a_j \neq 0} \{ (j-\ell) \eta + 2\sigma_j \}.
\]
Then we define
\begin{equation}\label{eq:pcritical}
p_c=\max_{\eta \in[0,\infty]} h(\eta), \ \text{where}\quad
	h(\eta)
	= \frac{n+\eta}{(n+\eta-g(\eta))_+}
	= 1 + \frac{g(\eta)}{(n+\eta-g(\eta))_+},
\end{equation}
where $b_+ = \max(b,0)$ for $b\in \mathbb R$ and we set $1/0=\infty$. We call~$p_c$ as in~\eqref{eq:pcritical} \emph{the critical exponent} for~\eqref{eq:CP}.
\end{Definition}


Our main result is the following.

\begin{Theorem}\label{thm:main}
Let~$L$ be as in~\eqref{eq:L}, in particular at least one among~$A_j$ is a non-integer power of the Laplace operator. We set~$p_c$ be as in Definition~\ref{def:homdim} and we fix~$q=n+2s$, where~$s\in(0,1)$ is given by
\begin{equation}\label{eq:s}
s = \min \{ \sigma_j-[\sigma_j]: \ \text{$\sigma_j$ is not integer and~$a_j\neq0$} \}.
\end{equation}
By~$[\sigma]$ we denote the largest integer, smaller than or equal to~$\sigma$, i.e., its floor function. We define
\[ I = \{ j \geq \ell: \ \sigma_{j+1}=0,\ a_{j+1} \neq 0\}. \]
We assume that~$u_j=0$ for any~$j\leq \ell-1$, that~$u_j\in L^1(\<x\>^qdx)$, for any~$j=\ell,\ldots,m-1$, with~$j\not\in I$, and that~$u_j\in L^1$ for any~$j\in I$. Here and in the following, we use the notation
\[ \<x\>=(1+|x|^2)^{\frac12}.\]
Moreover, we assume the sign condition
\begin{equation}\label{eq:sign}
\sum_{j \in I} a_{j+1} \int_{\R^n} u_j(x)\,dx > 0,
\end{equation}
where~$a_j$ are as in~\eqref{eq:L}. If there exists a global-in-time weak solution
\[ u \in W^{\ell,p}_\lloc([0,\infty),L^p(\R^n,\<x\>^{q}dx))\]
to~\eqref{eq:CP}, according to Definition~\ref{def:weak}, then~$p>p_c$.
\end{Theorem}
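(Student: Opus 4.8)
The plan is to argue by contradiction using the test function method adapted to fractional Laplacians. Suppose a global-in-time weak solution $u$ exists with $p \le p_c$. The starting point is the weak formulation of \eqref{eq:CP}: testing the equation against a test function $\psi$ of the form introduced in Section~\ref{sec:weaksol}, one obtains an identity relating $\int\int |\partial_t^\ell u|^p \psi$, a spatial term produced by the adjoint action of the operators $A_j=a_j(-\Delta)^{\sigma_j}$ on $\psi$, and boundary terms at $t=0$ carrying the initial data. The crucial feature of the chosen test function is that $(-\Delta)^{\sigma_j}$ acting on it (or, via the decomposition $\sigma_j=[\sigma_j]+(\sigma_j-[\sigma_j])$, the fractional part acting on an iterated ordinary Laplacian of it) stays controlled — this is exactly what Section~\ref{sec:fractLap} provides, playing the role that \eqref{eq:CC} played in the purely fractional heat case. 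The sign condition \eqref{eq:sign} is imposed precisely so that the surviving boundary term $\sum_{j\in I} a_{j+1}\int u_j\,dx$ enters with a favorable sign and can be moved to the left, yielding a lower bound
\[
c_0 \le C \iint |\partial_t^\ell u|^p\,\psi\,dx\,dt \Big/ \big(\text{norms of }\partial_t^\ell u \text{ against the weight}\big),
\]
after applying Hölder's inequality with exponents $p$ and $p'$ to the right-hand side of the weak identity.

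Next I would introduce the scaling. The test function is taken in separated form, $\psi(t,x)=\chi(R^{-\eta}t)\,\Phi_R(x)$ (or a comparable parabolic rescaling matched to the exponent $\eta$ that realizes the maximum in \eqref{eq:pcritical}), where $\Phi_R$ is the rescaled version of the spatial weight $\langle x\rangle^{-q}$-type profile with $q=n+2s$ from \eqref{eq:s}, and $R\to\infty$. The exponent $\eta$ is chosen to be (close to) the maximizer of $h(\eta)$; the definition of $g(\eta)$ as $\min_j\{(j-\ell)\eta+2\sigma_j\}$ is exactly the homogeneity in $R$ picked up when $A_j\partial_t^j$ hits $\psi$ and one divides by the $\partial_t^\ell$ that is absorbed into the nonlinearity, so that every term on the right of the weak identity is bounded by $R^{-g(\eta)} \times (\text{measure of support})^{1/p'}$ times the $L^p$-norm of $\partial_t^\ell u$ against the weight on the support. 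Counting powers of $R$: the space-time support has measure $\sim R^{n+\eta}$, so the Hölder factor contributes $R^{(n+\eta)/p'}$, and collecting everything gives
\[
c_0 \le C\,R^{\,-g(\eta)+(n+\eta)/p'} \,\big\| \partial_t^\ell u\big\|_{L^p(\text{supp}\,\psi,\,w)},
\]
where the weight $w$ is the one appearing in the function space of the theorem. When $p<p_c$ the exponent $-g(\eta)+(n+\eta)/p'$ is negative for the chosen $\eta$, so letting $R\to\infty$ forces $\|\partial_t^\ell u\|_{L^p(\R^{n+1},w)}=0$, hence $u\equiv$ (polynomial in $t$ of degree $<\ell$), which together with the vanishing of $u_j$ for $j\le\ell-1$ gives $u\equiv 0$, contradicting the strict sign condition \eqref{eq:sign}. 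The critical case $p=p_c$ is handled by the standard refinement: the $R$-exponent is then $0$, but one runs the Hölder estimate only over the annular region $R^{-\eta}t\in[1,2]$ (and $|x|$ in the corresponding shell) where $\partial_t\chi$ and the tails of $\Phi_R$ are supported, and uses that $\|\partial_t^\ell u\|_{L^p}$ over this shrinking-mass region tends to $0$ by dominated convergence since $\partial_t^\ell u\in L^p(w)$ globally; this again yields a contradiction.

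The main obstacle, and the genuinely new point over the integer-order case, is controlling the adjoint action of the fractional operators on the test function uniformly in $R$ — i.e.\ showing that $(-\Delta)^{\sigma_j}\psi$ really does scale like $R^{-2\sigma_j}$ times a function dominated by a fixed multiple of the weight $\langle x\rangle^{-q}$ used in the function space, rather than producing uncontrolled slowly-decaying tails (the phenomenon flagged after \eqref{eq:PC}: compactly supported test functions fail here). This is why $q=n+2s$ with $s$ the \emph{smallest} nonzero fractional part is forced: it is the threshold decay rate that survives the most singular fractional operator present while still making $\langle x\rangle^{-q}$ integrable, and it is the content of Section~\ref{sec:fractLap} that for this $q$ one has both the pointwise bound on $(-\Delta)^{s}\langle x\rangle^{-q}$ and its correct $R$-scaling under dilation. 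A secondary technical point is justifying the integrations by parts in $t$ in the weak formulation and the passage of $(-\Delta)^{\sigma_j}$ onto $\psi$ for a weak solution only in $W^{\ell,p}_{\mathrm{loc}}([0,\infty),L^p(w))$; this is where the precise Definition~\ref{def:weak} of weak solution — which, from the statement, is tailored to exactly this weight $\langle x\rangle^q dx$ — must be invoked, and checking that the chosen $\psi$ is an admissible test function in that sense. Once these ingredients are in place, the power-counting argument above closes the proof.
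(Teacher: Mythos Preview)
Your subcritical argument is essentially the paper's: the separated test function $\psi_R(t)\varphi_R(x)=\chi(R^{-\eta}t)^{mp'}\langle R^{-1}x\rangle^{-q}$, the pointwise bound $|A_j\varphi_R|\le CR^{-2\sigma_j}\varphi_R$ from Section~\ref{sec:fractLap}, H\"older, and the power count $-g(\eta)+(n+\eta)/p'<0$ all match.

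The gap is in the critical case. Your ``standard refinement'' of restricting to the shell where $\partial_t\chi$ and the tails of $\Phi_R$ live does not close here, because the principal part $L_p$ always contains at least one term with index $j\le\ell$ (this is Remark~\ref{rem:principalpart}). For such $j$ the time factor $\psi_R^{(j-\ell)}$ is a primitive (or $\psi_R$ itself), supported on the full interval $[0,R^\eta]$, not on a time shell; and the space factor $A_j\varphi_R$ is not supported in any spatial shell either, since $\varphi_R=\langle R^{-1}\cdot\rangle^{-q}$ is not compactly supported and the fractional Laplacian is nonlocal --- indeed $(A_j\varphi)(0)\neq0$ by the explicit computation~\eqref{eq:der0}. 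So for these terms the H\"older factor is $\|v\|_{L^p([0,R^\eta]\times\R^n,\varphi_R)}$, which increases to the full norm rather than tending to $0$, while the $R$-exponent is exactly $0$. Your dominated-convergence step therefore fails for precisely the terms that carry the critical scaling.

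The paper's fix is a genuinely additional idea: a second spatial dilation parameter $K\gg1$, replacing $\varphi_R$ by $\varphi_{RK}=\langle R^{-1}K^{-1}x\rangle^{-q}$. One first sends $R\to\infty$; the terms with $j\ge\ell+1$ vanish (time-shell support), while the terms with $j\le\ell$ survive but pick up a factor $K^{-2\sigma_j+n/p'}$. One then checks (using $p=p_c=h(\bar\eta)$ and $j\le\ell$) that $-2\sigma_j+n/p'<0$, so sending $K\to\infty$ kills these remaining terms and yields the contradiction. This two-parameter trick (going back to Fino--Karch) is what replaces the unavailable shell argument, and it is the piece your proposal is missing.
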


\begin{Remark}
We stated Theorem~\ref{thm:main} with the most possible general definition of weak solution, which was consistent with the special test function employed in our argument (our main difficulty was in dealing with a non compactly supported test function). In particular, the nonexistence result applies to more regular solutions. In other words, regular solutions are also weak solutions, as it is customary. We show this in Section~\ref{sec:weaksol}.
\end{Remark}

The sharpness of the critical exponent~$p_c$ is discussed in two ways: by a scaling argument in Section~\ref{sec:critical}, and by concrete examples collected in Section~\ref{sec:examples}. For these models, we may prove a global existence result for supercritical powers~$p>p_c$, and this shows the sharpness of the critical exponent~$p_c$ found in our paper.

\section{Test function and weak solutions}\label{sec:weaksol}

In this paper we employ a modified test function method to derive our results.
In order to deal with nonlocal operators, as the fractional Laplace operator is,
we will replace compactly supported test functions by suitable test functions with a polynomial decay.
To present this approach, we introduce a definition of weak solution to problem~\eqref{eq:CP} which fits our scopes.

\begin{Definition}\label{def:class}
Let~$L$ be as in~\eqref{eq:L} and fix~$q=n+2s$, where~$s\in(0,1)$ as in~\eqref{eq:s}. We define the space~$\mathcal C_q^\infty(\R^n)$ as the subspace of infinitely differentiable functions~$\varphi$ such that~$\<x\>^q\varphi$ is bounded, and for any~$\sigma>0$, with~$\sigma$ integer or~$\sigma-[\sigma]\in[s,1)$, the function $\<x\>^q (-\Delta)^\sigma\varphi$ is bounded.
\end{Definition}
\begin{Remark}
The space~$\mathcal C_q^\infty$ is a vector space. We will show in Section~\ref{sec:fractLap} that it is nonempty, in particular, the function~$\varphi(x)=\<x\>^{-r}$ is in~$\mathcal C_q^\infty$, for any~$r\geq q$. We notice that, due to~$q>n$, we get the inclusion
\[ \mathcal C_q^\infty\subset L^\infty(\R^n,\<x\>^{q}dx)\subset L^1.\]
Moreover, if~$\varphi\in \mathcal C_q^\infty$, then~$A_j\varphi\in L^\infty(\R^n,\<x\>^{q}dx)\subset L^1$ for any~$j=0,\ldots,m$, as well, as a consequence of~\eqref{eq:s}.
\end{Remark}

\begin{Definition}\label{def:weak}
Let~$L$ be as in~\eqref{eq:L} and fix~$q=n+2s$, where~$s\in(0,1)$ as in~\eqref{eq:s}. Assume that the initial data in~\eqref{eq:CP} verify the assumption
\[ \text{$u_j=0$ if~$j=0,\ldots,\ell-1$ and~$u_j\in L^1(\<x\>^{-q}dx)$ if $j \geq \ell$.}\]
We fix~$T\in(0,\infty]$. We say that~$u \in W^{\ell,p}_\lloc([0,T),L^p(\R^n,\<x\>^{-q}dx))$ is a weak solution to~\eqref{eq:CP}
if~$\partial_t^ju(0,\cdot)=0$ for any~$j\leq\ell-1$, and for any function~$\psi\in\mathcal C_c^\infty([0,T))$, with~$\psi=1$ in a neighborhood of~$0$ and for any function~$\varphi\in \mathcal C_q^\infty(\R^n)$, it holds
	\begin{align}
	&\int_0^T \psi(t)\,\int_{\R^n}|\partial_t^\ell u(t,x)|^p\,\varphi(x)\,dx\,dt
	\nonumber\\
	& \qquad = \sum_{j=0}^{m} (-1)^{(j-\ell)} \int_0^T \psi^{(j-\ell)}(t)\,\int_{\R^n} \partial_t^\ell u (t,x)\,A_j\varphi(x)\,dx\,dt
	\nonumber\\
	& \qquad \qquad - \sum_{j=\ell}^{m-1} \int_{\R^n}u_{j}(x)\,A_{j+1}\varphi(x)\,dx,
	\label{eq:1.4}
	\end{align}
where for $j < 0$, $\psi^{(j)}$ is the compactly supported primitive of~$\psi^{(j+1)}$, 
	\[
	\psi^{(j)}(t)
	= - \int_t^T \psi^{(j+1)}(\tau) d\tau.
	\]
We say that the weak solution is locally-in-time defined if~$T<\infty$, and is globally-in-time defined if~$T=\infty$. Equivalently, a function $u \in W^{\ell,p}_\lloc([0,\infty),L^p(\R^n,\<x\>^{-q}dx))$ is a global-in-time weak solution if, and only if, $u|_{[0,T)\times\R^n}$ is a local-in-time weak solution, for any~$T>0$.
\end{Definition}
\begin{Remark}\label{rem:Aphi}
We recall that in Definition~\ref{def:weak} and in the following, by~$\mathcal C_c^\infty([0,T))$, we denote the infinitely differentiable functions in~$[0,T)$, with compact support. In particular, functions in~$\mathcal C_c^\infty([0,T))$ vanish as~$t\to T$, but may assume nonzero values at~$t=0$, since the interval~$[0,T)$ is left-closed.

We stress that we fix a test function~$\psi$ with a constant value in a neighborhood of the origin in order to simplify the definition of weak solution, see the proof of Proposition~\ref{prop:classical}. The choice of assuming null initial data~$u_j$ in~\eqref{eq:CP}, for~$j\leq \ell-1$ has the same motivation. A definition of weak solution working for generic test functions~$\psi\in\mathcal C_c^\infty([0,T))$ may be easily derived following the proof of Proposition~\ref{prop:classical}.
\end{Remark}


\begin{Remark}
We remark that when~$\ell=0$, it holds
\[ L^p_\lloc([0,T),L^p(\R^n,\<x\>^{-q}dx)) \subset L^p_\lloc([0,T)\times\R^n) \]
so that the weak solution space in Definition~\ref{def:weak} is properly contained in a more customary weak solution space used when one deals with ``classical'' test functions~$\varphi\in\mathcal C_c^\infty$ and classical derivatives, say~$\sigma_j$ are integers (see Section~\ref{sec:integer}). We recall that~$u \in L^p_\lloc([0,T),X)$ where~$X$ is a normed functional space, if for any~$T_1\in[0,T)$, $u|_{[0,T_1]} \in L^p([0,T_1],X)$, that is,
\[ \int_0^{T_1} \|u(t,\cdot)\|_X^p\,dt <\infty. \]
We emphasize the crucial difference between the space~$L^p_\lloc((0,T),X)$ and its proper subspace~$L^p_\lloc([0,T),X)$.

The choice of the solution space $L^p_\lloc([0,T),L^p(\R^n,\<x\>^{-q}dx))$
is motivated by the fact that we are dealing with test functions of type~$\varphi(x)=\<x\>^{-q}$ in this paper, where~$q>n$.

When~$\ell\geq1$, the weak solution space is modified in $W^{\ell,p}_\lloc([0,T),L^p(\R^n,\<x\>^{-q}dx))$, so that the weak derivative~$\partial_t^\ell u$ exists in $L^p_\lloc([0,T),L^p(\R^n,\<x\>^{-q}dx))$. Moreover, by standard embeddings, $u\in \mathcal C^{\ell-1}([0,T),L^p(\R^n,\<x\>^{-q}dx))$, so that the initial condition~$\partial_t^ju(0,\cdot)=0$ (a.e.) for any~$j\leq\ell-1$ is well-defined.
\end{Remark}

\begin{Remark}
We notice that the integrals in Definition~\ref{def:weak} are well-defined for weak solutions. In particular, due to $A_j\varphi\in L^\infty(\<x\>^qdx)$ for any~$j=0,\ldots,m$ (Remark~\ref{rem:Aphi}), the integral
\[
\int_{\R^n}u_{j}(x)\,A_{j+1}\varphi(x)\,dx
\]
is well-defined, for any~$j=\ell,\ldots,m-1$, and
\begin{align*}
& \int_{\R^n}|\partial_t^\ell u(t,x)|\,|A_j\varphi(x)|\,dx \\
& \qquad \leq \Big( \int_{\R^n}|\partial_t^\ell u(t,x)|^p\,\<x\>^{-q}\,dx\Big)^{\frac1p}\,\Big(\int_{\R^n}\<x\>^{qp'/p}\,|A_j\varphi(x)|^{p'}\,dx\Big)^{\frac{1}{p'}},
\end{align*}
with the latter integral being bounded by
\[ \int_{\R^n}\<x\>^{qp'/p}\,|A_j\varphi(x)|^{p'}\,dx \leq C\,\int_{\R^n}\<x\>^{-q}\,dx\leq C', \]
due to~$q>n$.
\end{Remark}


We may easily show that smooth, classical solutions are weak solutions.

\begin{Proposition}\label{prop:classical}
Assume that~$u_j=0$ for any $j = 0, \cdots, \ell - 1$, and that~$u_j\in\mathcal S$, for any~$j=\ell, \ldots,m-1$ in~\eqref{eq:CP},
where~$\mathcal S(\R^n)$ is the Schwartz space. Assume that~$u\in\mathcal C^m([0,T),\mathcal S)$ is a ``classical'' solution to~\eqref{eq:CP}.
Then, $u$ is also a weak solution to~\eqref{eq:CP}, according to Definition~\ref{def:weak}.
\end{Proposition}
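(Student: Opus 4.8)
The plan is to start from the classical formulation $Lu=|\partial_t^\ell u|^p$ and integrate against the product test function $\psi(t)\varphi(x)$, moving all $t$-derivatives onto $\psi$ and all spatial operators $A_j$ onto $\varphi$, thereby producing the identity~\eqref{eq:1.4}. First I would fix $\psi\in\mathcal C_c^\infty([0,T))$ with $\psi\equiv1$ near $t=0$ and $\varphi\in\mathcal C_q^\infty(\R^n)$, multiply the equation by $\psi(t)\varphi(x)$, and integrate over $(0,T)\times\R^n$. Since $u(t,\cdot)\in\mathcal S$ for each $t$ and each $A_j=a_j(-\Delta)^{\sigma_j}$ is (formally) self-adjoint on $\mathcal S$ (e.g.\ via the Fourier transform, noting $(-\Delta)^{\sigma_j}\varphi\in L^1$ by the remarks after Definition~\ref{def:class} and $\partial_t^j u(t,\cdot)\in\mathcal S$), I can transfer $A_j$ from $\partial_t^j u$ onto $\varphi$ in the space integral, getting $\int \partial_t^j u\, A_j\varphi\,dx$.

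Next I would handle the time variable. For each $j$, integrate by parts $j-\ell$ times in $t$ on $\int_0^T \psi(t)\,\partial_t^j\!\big(\int \partial_t^\ell u\,A_j\varphi\,dx\big)\,dt$ — more precisely on $\int_0^T \psi(t)\partial_t^{j}\!\big[\langle \partial_t^0(\cdot)\rangle\big]$; when $j\ge\ell$ this shifts $j-\ell$ derivatives onto $\psi$ with sign $(-1)^{j-\ell}$ and, because $\psi$ and all its derivatives are compactly supported in $[0,T)$, the boundary term at $t=T$ vanishes, while the boundary term at $t=0$ is also killed since the lowest surviving time-derivative of $u$ that would appear is $\partial_t^{\ell-1}u,\dots,\partial_t^0 u$, all of which vanish at $t=0$ by hypothesis (for $j\le\ell-1$ no such term occurs as all derivatives land on $\psi$ and the initial data $u_j$ are zero). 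For the terms with $j\le\ell-1$, i.e.\ where we must integrate by parts the "wrong way," we use the primitive convention $\psi^{(j-\ell)}(t)=-\int_t^T\psi^{(j-\ell+1)}$; the boundary contributions again vanish at $T$ (compact support) and at $0$ (because $\psi^{(k)}(0)=0$ for $k<0$ by that very definition, and for the genuine data terms $u_j=0$ when $j<\ell$). The only nonvanishing boundary terms are exactly those coming from the $\partial_t^{\ell}u,\partial_t^{\ell+1}u,\ldots,\partial_t^{m-1}u$ factors evaluated at $t=0$ with $\psi\equiv1$ near $0$, which produce $-\sum_{j=\ell}^{m-1}\int_{\R^n}u_j(x)\,A_{j+1}\varphi(x)\,dx$ after carefully tracking signs and the index shift (the $A_{j+1}$ arises because the initial value of $\partial_t^j u$ multiplies the coefficient operator $A_{j+1}$ sitting in front of $\partial_t^{j+1}$).

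Finally I would check that all manipulations are legitimate: Fubini applies since $\partial_t^j u\in\mathcal C([0,T),\mathcal S)$ and $A_j\varphi\in L^1$, so the double integrals converge absolutely on $[0,T_1]\times\R^n$ for any $T_1<T$; differentiation under the integral sign in $t$ is justified by $u\in\mathcal C^m([0,T),\mathcal S)$; and the integration-by-parts identities in $t$ are just the fundamental theorem of calculus applied to smooth functions with the stated support/vanishing properties. I would also note $u\in W^{\ell,p}_\lloc([0,T),L^p(\<x\>^{-q}dx))$ trivially since $\mathcal S\subset L^p(\<x\>^{-q}dx)$ and $\partial_t^\ell u\in\mathcal C([0,T),\mathcal S)$. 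I expect the main obstacle to be purely bookkeeping: correctly accounting for the signs $(-1)^{j-\ell}$ and the index shift $j\mapsto j+1$ in the boundary term, and confirming that every boundary contribution at $t=0$ other than the $j\in\{\ell,\dots,m-1\}$ data terms genuinely cancels — this is where the hypotheses "$u_j=0$ for $j<\ell$" and "$\psi\equiv1$ near $0$" are used, and it must be verified that no term involving $\partial_t^j u(0,\cdot)$ with $j\le\ell-1$ survives.
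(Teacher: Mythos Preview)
Your approach is exactly the paper's: multiply by $\psi(t)\varphi(x)$, move $A_j$ onto $\varphi$ in space, then integrate by parts in time down (for $j\ge\ell+1$) or up via primitives (for $j\le\ell-1$) to reach $\partial_t^\ell u$. The final identity you obtain is correct.

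However, your explanation of \emph{which hypothesis kills which boundary term} is inverted, and one claim is false. For $j\ge\ell+1$, the $k$-th step of integration by parts produces a boundary term $\pm\psi^{(k-1)}(0)\,\partial_t^{j-k}u(0,x)$ with $j-k\ge\ell$; these derivatives of $u$ at $t=0$ are \emph{not} assumed to vanish. What kills all but the first of them is that $\psi\equiv1$ near $0$, hence $\psi^{(k-1)}(0)=0$ for $k\ge2$; only $\psi(0)\,\partial_t^{j-1}u(0,x)=u_{j-1}(x)$ survives, giving the data term. Conversely, for $j\le\ell-1$ the primitives satisfy $\psi^{(-r)}(0)=-\int_0^T\psi^{(-r+1)}(\tau)\,d\tau\neq0$ in general (your assertion ``$\psi^{(k)}(0)=0$ for $k<0$ by that very definition'' is wrong); what makes those boundary terms vanish is $\partial_t^{j}u(0,\cdot)=u_{j}=0$ for $j\le\ell-1$. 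So the two hypotheses act in the opposite places from what you wrote. Once you swap them, the bookkeeping goes through exactly as in the paper.
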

\begin{proof}
We multiply the equation~$Lu=|\partial_t^\ell u|^p$ in~\eqref{eq:CP}, by~$\psi(t)\varphi(x)$, and we integrate with respect to time and space. Due to~$\partial_t^ju(t,\cdot)\in\mathcal S$ and~$\varphi\in\mathcal C_q^\infty$, by integration by parts in space, we first get
\[
\int_0^T \psi(t)\,\int_{\R^n}\varphi(x)\partial_t^j A_j\,u(t,x)\,dx\,dt = \int_0^T \psi(t)\,\int_{\R^n}\partial_t^j u(t,x)\,A_j\varphi(x)\,dx\,dt,
\]
for any~$j=0,\ldots,m$. For any~$j=\ell+1,\ldots,m$, after~$j-\ell$ steps of integration by parts in time, recalling that~$\psi=1$ in a neighborhood of~$t=0$ and is compactly supported in~$[0,T)$, we then obtain
\begin{align*}
&\int_0^T \psi(t)\,\int_{\R^n}\partial_t^j u(t,x)\,A_j\varphi(x)\,dx\,dt\\
& \qquad = -\,\int_{\R^n}\partial_t^{j-1} u(0,x)\,A_j\varphi(x)\,dx\\
& \qquad \qquad +(-1)^{j-\ell} \int_0^T \psi^{(j-\ell)}(t)\,\int_{\R^n} \partial_t^{\ell} u(t,x)\,A_j\varphi(x)\,dx\,dt\\
& \qquad = -\,\int_{\R^n} u_{j-1}(x)\,A_j\varphi(x)\,dx\\
& \qquad \qquad +(-1)^{j-\ell} \int_0^T \psi^{(j-\ell)}(t)\,\int_{\R^n} \partial_t^{\ell} u(t,x)\,A_j\varphi(x)\,dx\,dt,
\end{align*}
where in the last equality we replaced the initial condition~$\partial_t^{j-1} u(0,x)=u_{j-1}(x)$.

If~$\ell\geq1$, we shall also integrate by parts the term in which the $j$-th time derivative of~$u$ appears, for any~$j=0,\ldots,\ell-1$. After the first step of integration by parts, we obtain
\begin{align*}
& \int_0^T \psi(t)\,\int_{\R^n} \partial_t^j u(t,x)\,A_j\varphi(x)\,dx\,dt\\
& \qquad = - \psi^{(-1)}(0)\, \int_{\mathbb R^n} \partial_t^j u(0,x)\,A_j\varphi(x) dx \\
& \qquad \qquad - \int_0^T \psi^{(-1)}(t) \int_{\mathbb R^n} \partial_t^{j+1} u(t,x)\,A_j\varphi(x) dx \, dt\\
& \qquad = - \int_0^T \psi^{(-1)}(t) \int_{\mathbb R^n} \partial_t^{j+1} u(t,x)\,A_j\varphi(x) dx \, dt,
\end{align*}
where in the last equality we replaced the initial condition~$\partial_t^j u(0,x)=0$. Similarly, after a total of~$\ell-j$ steps of integration by parts, we get
\begin{align*}
& \int_0^T \psi(t)\,\int_{\R^n} \partial_t^j u(t,x)\,A_j\varphi(x)\,dx\,dt\\
& \qquad = (-1)^{\ell-j} \int_0^T \psi^{(j-\ell)}(t) \int_{\mathbb R^n} \partial_t^{\ell} u(t,x)\,A_j\varphi(x) dx \, dt.
\end{align*}
This concludes the proof.
\end{proof}


\section{Fractional Laplacian and its action on the test function}\label{sec:fractLap}

There are several possible definitions of fractional powers of the Laplace operator, which are equivalent on suitable classes of functions.

\begin{Definition}\label{def:Laplaceop}
For any~$s>0$, we may define the fractional Laplace operator~$(-\Delta)^{\frac{s}2}: H^s\to L^2$, as
\[ (-\Delta)^{\frac{s}2} f = \mathfrak{F}^{-1}(\xii^s\hat f), \]
where $\mathfrak F$ is the Fourier transformation in~$L^2(\R^n)$, and we denote~$\hat f=\mathfrak{F}(f)$.
When~$s$ is an even integer, the definition is consistent with the definition of integer power of the Laplace operator.

If~$s>0$ is not an even integer, the operator~$(-\Delta)^{\frac s 2}$ admits an integral representation.
For $y \in \mathbb R^n$, let $\tau_y$ be the translation operator given by $\tau_y f(x) = f(x+y)$. Then the identity
\begin{align}
(-\Delta)^{s/2} f(x)
= (-1)^{[s/2]+1} C_s \int_{\mathbb R^n} \frac{(\tau_{y/2} - \tau_{-y/2})^{2[s/2]+2} f(x)}{|y|^{n+s}} dy
\label{eq:intrep}
\end{align}
holds for any~$f\in H^s$, where
\[
C_s
= 2^{-2[s/2]-2+s} \bigg( \int_{\mathbb R^n} \frac{\sin(y_1)^{2[s/2]+2}}{|y|^{n+s}} dy \bigg)^{-1}
> 0.
\]
The identity above is essentially shown in \cite{NPV2012}.
\end{Definition}

The fractional Laplace operator may be conveniently extended to more general spaces, in particular, it may be extended by duality to the tempered distribution space.

We collect some pointwise controls for the fractional derivative of bounded functions with bounded derivatives. For these functions, the action of the fractional Laplace operator may be defined via~\eqref{eq:intrep}.
%
\begin{Lemma}
\label{Lemma:2.1}
Assume $f \in C^2$, bounded, with bounded derivatives. If there exists a constant $C_0$ such that the estimate
	\[
	|f(y)| \leq C_0 |f(x)|,\quad
	\sup_{|\alpha|=2} |\partial^\alpha f(y)|
	\leq C_0 \sup_{|\alpha|=2} |\partial^\alpha f(x)|
	\]
hold when $|x| \leq |y|$,
then for $|x| > 1$,
the following pointwise estimate holds:
	\begin{align}
	| (-\Delta)^{\sigma} f (x)|
	&\leq C {|x|^{-n-2 \sigma}} \int_{|y| < 3 |x|} |f(y)| dy
	+ C |f(x)| |x|^{-2 \sigma}
	\nonumber\\
	&+ \frac{2^{3-2 \sigma}}{2-2 \sigma} C
	|x|^{2-2 \sigma}
	\sum_{|\alpha|=2} \frac{|\alpha|}{\alpha!}
	|\partial^\alpha f ( \frac x 2 ) |,
	\label{eq:2.2}
	\end{align}
for any~$\sigma\in(0,1)$, where the action of the fractional Laplace operator is defined by~\eqref{eq:intrep}.
\end{Lemma}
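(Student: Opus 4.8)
The plan is to localize the integral representation~\eqref{eq:intrep} for $(-\Delta)^\sigma f$, specialized to $[s/2]=0$ so that $2[s/2]+2=2$ and the relevant second-order difference operator is $(\tau_{y/2}-\tau_{-y/2})^2$, and to split the $y$-integration into a near region $|y|<|x|$ (equivalently, the zone where the second-order Taylor expansion of $f$ around the point $x$ is effective) and a far region $|y|\ge|x|$. First I would write
\[
(-\Delta)^\sigma f(x) = -C_{2\sigma}\int_{\R^n}\frac{(\tau_{y/2}-\tau_{-y/2})^2 f(x)}{|y|^{n+2\sigma}}\,dy
= -C_{2\sigma}\int_{\R^n}\frac{f(x+y/2)+f(x-y/2)-2f(x)}{|y|^{n+2\sigma}}\,dy,
\]
and after the change of variables $y\mapsto 2y$ rewrite this as a constant times $\int \big(f(x+y)+f(x-y)-2f(x)\big)|y|^{-n-2\sigma}\,dy$. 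In the near region I would Taylor-expand: $f(x+y)+f(x-y)-2f(x)=\sum_{|\alpha|=2}\frac{2}{\alpha!}\partial^\alpha f(\xi_{x,y})\,y^\alpha$ for some point $\xi_{x,y}$ on the segment between $x\pm y$ (second-order Taylor with Lagrange remainder, valid since $f\in C^2$); bounding $|y^\alpha|\le|y|^2$ and using the growth hypothesis $\sup_{|\alpha|=2}|\partial^\alpha f(\xi)|\le C_0\sup_{|\alpha|=2}|\partial^\alpha f(x/2)|$ — here one must check that $|\xi_{x,y}|\ge|x|/2$ is \emph{not} what is needed; rather, for $|y|<|x|$ the point $\xi_{x,y}$ satisfies $|\xi_{x,y}|\ge |x|-|y|/2 > |x|/2$, hence by the monotonicity hypothesis applied with the roles chosen appropriately one controls $\partial^\alpha f(\xi_{x,y})$ by $\partial^\alpha f$ evaluated at the reference point $x/2$ (which has $|x/2|\le|\xi_{x,y}|$). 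Integrating $|y|^{2-n-2\sigma}$ over $|y|<|x|$ converges because $2\sigma<2$ and produces the factor $\frac{2^{3-2\sigma}}{2-2\sigma}|x|^{2-2\sigma}$ with the combinatorial weights $\frac{|\alpha|}{\alpha!}$, giving the third term on the right of~\eqref{eq:2.2}.

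For the far region $|y|\ge|x|$ I would bound the difference crudely by $|f(x+y)|+|f(x-y)|+2|f(x)|$. The contribution of $2|f(x)|$ is $2|f(x)|\int_{|y|\ge|x|}|y|^{-n-2\sigma}\,dy = C|f(x)||x|^{-2\sigma}$, which is the second term of~\eqref{eq:2.2}. For $\int_{|y|\ge|x|}|f(x\pm y)||y|^{-n-2\sigma}\,dy$ I would split further into $|x|\le|y|\le 2|x|$ and $|y|>2|x|$: on $|x|\le|y|\le 2|x|$ we have $|y|^{-n-2\sigma}\le|x|^{-n-2\sigma}$ and $x\pm y$ ranges inside $\{|z|<3|x|\}$, so this piece is $\le C|x|^{-n-2\sigma}\int_{|z|<3|x|}|f(z)|\,dz$ (the first term of~\eqref{eq:2.2}); on $|y|>2|x|$ we have $|x\pm y|\ge|y|-|x|\ge|y|/2\ge|x|$, so the monotone growth hypothesis gives $|f(x\pm y)|\le C_0|f(x)|$ (using $|x|\le|x\pm y|$ in the wrong direction?) — here I must be careful: the hypothesis says $|f(y')|\le C_0|f(x')|$ when $|x'|\le|y'|$, so with $x'=x$ and $y'=x\pm y$ (legitimate since $|x|\le|x\pm y|$) we get $|f(x\pm y)|\le C_0|f(x)|$, and then $\int_{|y|>2|x|}|f(x\pm y)||y|^{-n-2\sigma}dy\le C_0|f(x)|\int_{|y|>2|x|}|y|^{-n-2\sigma}dy=C|f(x)||x|^{-2\sigma}$, absorbed into the second term. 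Summing the three regional estimates and collecting constants yields~\eqref{eq:2.2}.

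\textbf{Main obstacle.} The delicate point is the bookkeeping of \emph{where} the various arguments of $f$ and $\partial^\alpha f$ live and how to legitimately invoke the monotonicity hypotheses: in the near region the Taylor remainder point $\xi_{x,y}$ must be shown to satisfy $|\xi_{x,y}|\ge|x|/2$ so that $\sup_{|\alpha|=2}|\partial^\alpha f(\xi_{x,y})|\le C_0\sup_{|\alpha|=2}|\partial^\alpha f(x/2)|$ (this needs $|y|<|x|$, which is exactly why the splitting radius is $|x|$, or perhaps a slightly different radius — one should double-check whether the clean constant $\frac{2^{3-2\sigma}}{2-2\sigma}$ forces the cutoff to be precisely at $|y|=|x|$ after the $y\mapsto 2y$ scaling, accounting for the factor $2^{2\sigma}$ from $|2y|^{-n-2\sigma}$ versus the $2$ in $f(x\pm y)$ from the original $f(x\pm y/2)$). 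All remaining estimates are routine radial integrations of $|y|^{a}$ with $a=1-n-2\sigma$ or $a=-n-2\sigma$, whose convergence/divergence at the appropriate endpoint is governed by $0<2\sigma<2$; no global integrability of $f$ is used, only boundedness.
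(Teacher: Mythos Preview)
Your approach is essentially the paper's: split the singular integral at a radius comparable to~$|x|$, use the second-order Taylor remainder in the near zone together with the monotonicity hypothesis on~$\partial^\alpha f$, and in the far zone separate an intermediate annulus (producing the averaged term $\int_{|z|<3|x|}|f|$) from the tail (where the monotonicity hypothesis on~$f$ itself gives the $|f(x)||x|^{-2\sigma}$ contribution).

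The one slip is the splitting radius. After your change of variables the integrand involves $f(x\pm y)$, so the Taylor remainder points are $x\pm\theta y$ with $\theta\in[0,1]$ and hence $|\xi_{x,y}|\ge |x|-|y|$, \emph{not} $|x|-|y|/2$. With your cut at $|y|<|x|$ this only yields $|\xi_{x,y}|>0$, which is not enough to invoke the hypothesis $\sup_{|\alpha|=2}|\partial^\alpha f(\xi)|\le C_0\sup_{|\alpha|=2}|\partial^\alpha f(x/2)|$. The fix---which is exactly what the paper does---is to cut at $r=|x|/2$: then $|\xi_{x,y}|>|x|/2$ in the near zone, and the far zone becomes $|y|>|x|/2$, which you split into $|x|/2<|y|<2|x|$ (still giving $|x+y|<3|x|$, so your averaged-term argument goes through unchanged) and $|y|>2|x|$ (your tail argument is already correct there). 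This also accounts for the power of~$2$ in the constant $\tfrac{2^{3-2\sigma}}{2-2\sigma}$, coming from $\int_{|y|<|x|/2}|y|^{2-n-2\sigma}dy$ together with the prefactor~$2C_{2\sigma}$. You had already flagged this bookkeeping as the main obstacle; with the radius corrected the rest of your outline is correct.
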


\begin{proof}

We conveniently modify \eqref{eq:intrep}, i.e.,
\[ (-\Delta)^{\sigma} f(x) = -C_{2\sigma} \int_{\mathbb R^n} \frac{f(x+y)-2f(x)+f(x-y)}{|y|^{n+2\sigma}} dy. \]
The Taylor theorem implies that
	\begin{align*}
	f(x+y) - f(x)
	&= \nabla f(x) \cdot y
	+ \sum_{|\alpha|=2} \frac{|\alpha|}{\alpha!} y^\alpha \int_0^1 (1-\theta) \partial^\alpha f(x + \theta y ) d \theta,\\
	f(x-y) - f(x)
	&= - \nabla f(x) \cdot y
	+ \sum_{|\alpha|=2} \frac{|\alpha|}{\alpha!} (-y)^\alpha \int_0^1 (1-\theta) \partial^\alpha f(x - \theta y ) d \theta.
	\end{align*}
Thereofore, by the symmetry,
$(-\Delta)^{\sigma} f$ is expressed by
	\begin{align}
	(-\Delta)^{\sigma} f(x)
	&= 2 C_{2 \sigma} \thinspace
	\int_{|y|>r} \frac{f(x) - f(x+y)}{|y|^{n+2 \sigma}} dy
	\nonumber\\
	& - 2 C_{2 \sigma} \thinspace
	\sum_{|\alpha|=2} \frac{|\alpha|}{\alpha!}
	\int_{|y|<r} \frac{y^\alpha}{|y|^{n+2\sigma}}
	\int_0^1 (1-\theta) \partial^\alpha f(x + 2 \theta y ) d \theta \thinspace dy
	\label{eq:2.3}
	\end{align}
with any $r > 0$.
We put $r = |x|/2$.
We estimate the first term of the right-hand side of \eqref{eq:2.3}.
At first we have
	\begin{align}
	\int_{|y|>|x|/2} \frac{f(x)}{|y|^{n+2 \sigma}}d y
	&= C f(x) | x |^{-2 \sigma}.
	\label{eq:2.4}
	\end{align}
The following estimates also hold:
	\begin{align}
	&\int_{|y|> |x|/2} \frac{|f(x + y)|}{|y|^{n+2 \sigma}}d y
	\nonumber\\
	&\leq \int_{|x|/2 < |y| < 2 |x|} \frac{|f(x + y)|}{|y|^{n+2 \sigma}}d y
	+ \int_{|y| > 2 |x|} \frac{|f(x + y)|}{|y|^{n+2 \sigma}}d y
	\nonumber\\
	&\leq C {|x|^{-n-2 \sigma}} \int_{|y| < 3 |x|} |f(y)| dy
	+ C C_0 |f(x)| |x|^{- 2 \sigma}.
	\label{eq:2.5}
	\end{align}
The second term of the right-hand side of \eqref{eq:2.3} is estimated by
	\begin{align}
	& \bigg|
	\sum_{|\alpha|=2} \frac{|\alpha|}{\alpha!}
	\int_{|y|<|x|/2} \frac{y^\alpha}{|y|^{n+ 2 \sigma}}
	\int_0^1 (1-\theta) \partial^\alpha f(x + \theta y ) d \theta \thinspace dy \bigg|
	\nonumber\\
	&\leq C
	\sum_{|\alpha|=2} \frac{|\alpha|}{\alpha!}
	\int_{|y|< |x|/2} \frac{dy}{|y|^{n+2 \sigma-2}}
	|\partial^\alpha f ( \frac x 2 ) |
	\nonumber\\
	&\leq \frac{2^{2-2 \sigma}}{2- 2 \sigma} C
	|x|^{2-2\sigma}
	\sum_{|\alpha|=2} \frac{|\alpha|}{\alpha!}
	|\partial^\alpha f ( \frac x 2 ) |.
	\label{eq:2.6}
	\end{align}
Since \eqref{eq:2.4}, \eqref{eq:2.5}, and \eqref{eq:2.6} imply \eqref{eq:2.2}, we conclude the proof.
\end{proof}

As a consequence of Lemma~\ref{Lemma:2.1}, we derive the following.

\begin{Corollary}\label{cor:test}
Let~$f(x)=\<x\>^{-q}$, for~$q>n$, and let~$\sigma>0$.
We set~$s=\sigma-[\sigma]$. Then
\begin{align}
\forall x\in\R^n:\quad |(-\Delta)^\sigma f(x)|\leq C\,\<x\>^{-q_\sigma}
\label{eq:pointwise control}
\end{align}
where $q_\sigma=q+2\sigma$ if~$\sigma$ is an integer, or~$q_\sigma=n+2s$ otherwise,
and the constant~$C$ verifies the following bound from below:
\begin{align}
C=C(n,\sigma,q)
\geq (-\Delta)^\sigma f(0) = 2^{2\sigma}\,\frac{\Gamma(\sigma+n/2)}{\Gamma(n/2)}\,\frac{\Gamma(\sigma+q/2)}{\Gamma(q/2)}.
\label{eq:C}
\end{align}
Here, $\Gamma$ denotes the gamma function.
\end{Corollary}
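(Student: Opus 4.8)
The plan is to reduce the pointwise bound~\eqref{eq:pointwise control} to Lemma~\ref{Lemma:2.1} — more precisely, to the argument in its proof — together with the elementary decay of $f(x)=\<x\>^{-q}$ and its derivatives, and then to obtain the lower bound~\eqref{eq:C} by evaluating at the origin. First I would record that $f\in C^\infty$ is bounded, radially nonincreasing, and that $|\partial^\beta f(x)|\le C_\beta\,\<x\>^{-q-|\beta|}$ for every multi-index $\beta$; this follows by a short induction, writing $\partial^\beta f$ as a finite linear combination of terms $x^\gamma\<x\>^{-q-2j}$ with $2j-|\gamma|=|\beta|$. In particular $f$ satisfies the hypotheses of Lemma~\ref{Lemma:2.1}: the monotonicity of $|f|$ is the radial monotonicity of $\<\cdot\>^{-q}$, and for the second derivatives it suffices to combine the upper bound $|\partial^\alpha f(y)|\le C\<y\>^{-q-2}$ with the radial monotonicity of $\<\cdot\>^{-q-2}$, which is the only use of that hypothesis in the proof of the Lemma.

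If $\sigma=k\in\N$, then $(-\Delta)^k$ is a constant-coefficient differential operator of order $2k$, so $(-\Delta)^kf$ is a finite combination of $\partial^\gamma f$ with $|\gamma|=2k$; by the previous step $|(-\Delta)^kf(x)|\le C\<x\>^{-q-2k}=C\<x\>^{-q_\sigma}$, and likewise $|\partial^\beta(-\Delta)^kf(x)|\le C\<x\>^{-q-2k-|\beta|}$. For the remaining case write $\sigma=k+s$ with $k=[\sigma]\ge0$ and $s=\sigma-[\sigma]\in(0,1)$, and set $g=(-\Delta)^kf$ (so $g=f$ if $k=0$); the factorization $(-\Delta)^\sigma f=(-\Delta)^sg$ is legitimate on the function classes at hand, e.g.\ through the Fourier multipliers $\xii^{2\sigma}=\xii^{2k}\xii^{2s}$. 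Then $g$ is smooth and bounded with $|\partial^\beta g(x)|\le C\<x\>^{-Q-|\beta|}$, where $Q:=q+2k>n$. I would now run the argument of the proof of Lemma~\ref{Lemma:2.1} with $g$ in place of $f$, using the explicit decay of $g$ and its derivatives wherever that proof appeals to the comparison hypotheses (this is the delicate point, see below): in the resulting analogue of~\eqref{eq:2.2} each of the three terms is $\le C\<x\>^{-n-2s}$ for $|x|>1$ — the first because $\int_{\R^n}\<y\>^{-Q}\,dy<\infty$ as $Q>n$, the second and third because $\<x\>^{-Q-2s}\le\<x\>^{-n-2s}$. For $|x|\le1$ the integral representation shows that $(-\Delta)^sg$ is continuous, hence bounded, while $\<x\>^{-n-2s}$ is bounded below there. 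Altogether $|(-\Delta)^\sigma f(x)|\le C\<x\>^{-n-2s}=C\<x\>^{-q_\sigma}$, which together with the integer case yields~\eqref{eq:pointwise control}.

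It remains to prove~\eqref{eq:C}. Since~\eqref{eq:pointwise control} holds at $x=0$, we get $C\ge|(-\Delta)^\sigma f(0)|$, so it is enough to compute $(-\Delta)^\sigma f(0)$ and to check that it is positive. Here I would use the subordination identity $\<x\>^{-q}=\Gamma(q/2)^{-1}\int_0^\infty\lambda^{q/2-1}e^{-\lambda}e^{-\lambda|x|^2}\,d\lambda$, interchange $(-\Delta)^\sigma$ with the $\lambda$-integral (justified on the Fourier side, where all integrands are nonnegative), and invoke the Gaussian identity $(-\Delta)^\sigma(e^{-\lambda|\cdot|^2})(0)=2^{2\sigma}\,\frac{\Gamma(\sigma+n/2)}{\Gamma(n/2)}\,\lambda^{\sigma}$, which is a routine polar-coordinates computation since the Fourier transform of $e^{-\lambda|\cdot|^2}$ is again a Gaussian. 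Inserting this and carrying out the remaining $\lambda$-integral, which equals $\Gamma(\sigma+q/2)$, produces exactly the value stated in~\eqref{eq:C}.

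The step I expect to be the main obstacle is the range $\sigma>1$ with $\sigma\notin\N$. Lemma~\ref{Lemma:2.1} is stated only for exponents in $(0,1)$, so one has to extract the decay from the intermediate function $g=(-\Delta)^{[\sigma]}f$, which in general is neither sign-definite nor radially monotone; hence the Lemma cannot be quoted verbatim, and its proof must be revisited, using the explicit polynomial decay of $g$ and of its second derivatives in place of the comparison hypotheses. The point that makes this work is that $g$ still decays faster than $\<x\>^{-n}$ (because $q>n$), so that the nonlocal tail of $(-\Delta)^s$ contributes precisely the borderline rate $\<x\>^{-n-2s}$ — which is what forces $q_\sigma=n+2s$ rather than $n+2\sigma$. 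The Gamma-function computation in the last step, while clean, is the other place where a genuine (if standard) calculation is needed.
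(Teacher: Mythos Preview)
Your proposal is correct, and both the pointwise bound and the value at the origin come out as claimed. The two places where your route diverges from the paper's are worth noting. For non-integer~$\sigma$, you factor $(-\Delta)^\sigma f=(-\Delta)^s g$ with $g=(-\Delta)^{[\sigma]}f$ and then re-run the proof of Lemma~\ref{Lemma:2.1} on~$g$, substituting the explicit decay $|\partial^\beta g|\lesssim\<x\>^{-Q-|\beta|}$ for the monotonicity hypotheses; this works, but as you anticipate it requires reopening the lemma. The paper sidesteps this entirely by observing that $(-\Delta)^{[\sigma]}\<x\>^{-q}$ is an \emph{explicit} finite linear combination $\sum_k c_k\,\<x\>^{-q-2[\sigma]-2k}$, obtained by iterating the identity $-\Delta\<x\>^{-q}=-q(q+2-n)\<x\>^{-q-2}+q(q+2)\<x\>^{-q-4}$; each summand $\<x\>^{-r}$ with $r>n$ \emph{does} satisfy the hypotheses of Lemma~\ref{Lemma:2.1} verbatim, so the lemma applies term by term as a black box and the obstacle you flag simply does not arise. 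For the evaluation of $(-\Delta)^\sigma f(0)$, the paper computes the Fourier transform of $\<x\>^{-q}$ in terms of modified Bessel functions and then integrates $|\xi|^{2\sigma}$ against it, invoking two classical Bessel integral identities; your subordination route through $\<x\>^{-q}=\Gamma(q/2)^{-1}\int_0^\infty\lambda^{q/2-1}e^{-\lambda}e^{-\lambda|x|^2}\,d\lambda$ together with the Gaussian identity is a genuine alternative --- more elementary in that it avoids special functions --- and yields the same formula.
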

\begin{proof}
It is easy to check that
\begin{equation}\label{eq:iteration}
(-\Delta)^{[\sigma]} f (x) = \sum_{k=0}^{[\sigma]} c_k \, \<x\>^{-q-2[\sigma]-2k},
\end{equation}
for some~$c_k=c_k(n,\sigma,q)\in\R$.
Indeed, it is sufficient to iterate the equality
\begin{align*}
-\Delta f(x)
    & = q \sum_{j=1}^n \partial_{x_j} \big( x_j \<x\>^{-q-2} \big) \\
    & = q \sum_{j=1}^n \big( \<x\>^{-q-2} -(q+2) x_j^2 \<x\>^{-q-4} \big)\\
    & = -q (q+2-n) \<x\>^{-q-2} + q(q+2)\<x\>^{-q-4}.
\end{align*}
Therefore, \eqref{eq:pointwise control} holds when~$\sigma>0$ is an integer, that is, $s=0$.
Let us assume that~$\sigma$ is not an integer, that is, $s\in(0,1)$.
Applying Lemma~\ref{Lemma:2.1} separately to each term~$\<x\>^{-q-2[\sigma]-2k}$ in~\eqref{eq:iteration}, we may estimate
\[ |(-\Delta)^s\<x\>^{-q-2[\sigma]-2k}| \leq C\,\<x\>^{-n-2s}, \]
and this implies \eqref{eq:pointwise control}.

Estimate~\eqref{eq:C} follows from the identity
	\begin{equation}\label{eq:der0}
	(-\Delta)^{\sigma} [ \langle \cdot \rangle^{-q} ] (0)
	= 2^{2\sigma}\,\frac{\Gamma(\sigma+n/2)}{\Gamma(n/2)}\,\frac{\Gamma(\sigma+q/2)}{\Gamma(q/2)}.
	\end{equation}
Indeed, for~$-1<\nu<2\mu+3/2$, it holds (see~\cite[p.50]{S}, see also~\cite[\textsection 13.6, p.434]{TB})
\[ \int_0^\infty \frac{\rho^{\nu+1}}{(\rho^2+1)^{\mu+1}}\,J_\nu(r\rho)\,d\rho = (r/2)^\mu \frac{K_{\nu-\mu}(r)}{\Gamma(\mu+1)}, \]
where~$J_\nu$ and~$K_\nu$ are the Bessel functions of, respectively, first kind and second kind, with order~$\nu$. Then, setting
\[ \nu=n/2-1, \quad \mu=q/2-1, \]
and~$r=|\xi|$, we get
\begin{align*}
\mathfrak F [ \langle \cdot \rangle^{-q} ] (\xi)
    & = \frac{1}{(2 \pi)^{n/2}} \int_{\R^n} \frac{e^{-ix\cdot\xi}}{(1+|x|^2)^{q/2}} dx
    = |\xi|^{1-\frac{n}2}\, \int_0^\infty \frac{\rho^{\frac{n}2}\,J_{\frac{n}2-1}(\rho|\xi|)}{(1+\rho^2)^{q/2}} d\rho\\
    & = |\xi|^{1-\frac{n}2}\,(|\xi|/2)^{q/2-1}\,\frac{K_{(n-q)/2}(|\xi|)}{\Gamma(q/2)}
    = |\xi|^{\frac{q-n}2}\,2^{1-q/2}\,\frac{K_{(n-q)/2}(|\xi|)}{\Gamma(q/2)}.
\end{align*}
On the other hand, for~$\mu>|\nu|$, it holds (see~\cite[\textsection 13.21, p.388]{TB})
\[ \int_0^\infty K_\nu(r)\,r^{\mu-1}\,dr = 2^{\mu-2}\,\Gamma((\mu-\nu)/2)\,\Gamma((\mu+\nu)/2), \]
so that
\begin{align*}
(-\Delta)^{\sigma} [ \langle \cdot \rangle^{-q} ] (0)
	&= \frac{1}{(2 \pi)^{n/2}} \int_{\mathbb{R}^n} |\xi|^{2\sigma} \mathfrak F [ \langle \cdot \rangle^{-q} ] (\xi) d \xi\\
    &= \frac{|S^{n-1}|}{(2 \pi)^{n/2}}\,\frac{2^{1-q/2}}{\Gamma(q/2)}\,\int_0^\infty r^{2\sigma+n-1+\frac{q-n}2}\,K_{(n-q)/2}(r)\,dr \\
    &= \frac{2\pi^{n/2}}{(2 \pi)^{n/2}\Gamma(n/2)}\,\frac{2^{1-q/2}}{\Gamma(q/2)}\,2^{2\sigma+n+\frac{q-n}2-2}\,\Gamma(\sigma+q/2)\,\Gamma(\sigma+n/2)\\
    &= 2^{2\sigma}\,\frac{\Gamma(\sigma+n/2)}{\Gamma(n/2)}\,\frac{\Gamma(\sigma+q/2)}{\Gamma(q/2)},
\end{align*}
where we put
\[ \nu=\frac{n-q}2, \quad \mu=2\sigma+n+\frac{q-n}2, \]
and we replaced~$|S^{n-1}|=2\pi^{n/2}/\Gamma(n/2)$.
\end{proof}

Thanks to Corollary~\ref{cor:test}, we get that the function~$\phi(x)=\<x\>^{-q}$, where~$q=n+2s$,
with~$s$ defined as in~\eqref{eq:s}, belongs to the test functions space~$\mathcal C_q^\infty$,
introduced in Definition~\ref{def:class}.


\section{The critical exponent}\label{sec:critical}

In this section, we describe how we derived the critical exponent in Definition~\ref{def:homdim},
and how to compute it.
The function~$g$ is designed in order to describe the scaling property of the operator~$L$ defined in~\eqref{eq:L},
according to a scaling parameter~$\eta$.
Namely, if one applies a scaling argument to the time and space variable of the problem~\eqref{eq:CP} by replacing $(t,x)\mapsto (R^{-\eta},R^{-1}x)$ for some~$R\gg1$, then
\[ \partial_t^{j-\ell}(\psi(R^{-\eta}t))\,A_j(\varphi(R^{-1}x)) = R^{-(j-\ell)\eta-2\sigma_j}\,(\partial_t^{j-\ell}\psi)(R^{-\eta}t)\,(A_j\varphi)(R^{-1}x).  \]
The terms above are the ones appearing in the definition of weak solution given by~\eqref{eq:1.4}, and are related to the scaling of each term in~$L$.

Since we are interested in taking the limit as~$R\to\infty$, for any possible scaling parameter~$\eta$, the function~$g(\eta)$ gives the largest quantity of type~$R^{-(j-\ell)\eta-2\sigma_j}$, for any~$j$. Namely,
\[ \forall R\geq1, \qquad R^{-(j-\ell)\eta-2\sigma_j} \leq R^{-g(\eta)}. \]
The scaling of the linear operator~$L$ is then compared with the power nonlinearity~$|\partial_t^\ell u|^p$ in~\eqref{eq:CP}. In particular, for a given scaling parameter~$\eta$, one may prove that global solutions to~\eqref{eq:CP} may exist only if~$p\geq h(\eta)$ (see the proof of Theorem~\ref{thm:main}).

Since our aim is to obtain the largest possible range for nonexistence, we look for parameters~$\bar\eta$ which realize the maximum~$h(\bar\eta)=\max_{\eta\geq0} h(\eta)$ and we apply our test function method with that optimal scaling~$(R^{-\bar\eta}t,R^{-1}x)$. Indeed, as a consequence of the previous discussion, a scaling~$(R^{-\eta}t,R^{-1}x)$ gives the largest possible range for nonexistence if we fix~$\eta=\bar\eta$.

It is clear that $h(\eta)$ admits a maximum value (which may also be~$\infty$ in some cases), since~$h$ is a continuous function on the compact interval~$[0,\infty]$. In this section, we show how to find~$\max_{\eta\in[0,\infty]}h(\eta)$ constructively. In particular, we show that if~$p_c\in(1,\infty)$, then there exists a unique maximum point~$\bar \eta$ in the interval~$(0,\infty)$; moreover, $h$ is increasing in~$[0,\bar\eta]$ and decreasing in~$[\bar\eta,\infty]$.

First of all, we discuss how~$g$ may be conveniently represented.

\begin{Remark}
\label{Remark:1.9}
The function~$g$ is continuous and piecewise smooth,
	since it is piecewise described by the line~$(j-\ell)\eta + 2\sigma_j$ with some $j$.

Moreover, there exists a unique finite sequence~$\{(j_k,\eta_k)\}_{k=0}^{m_1}$ with~$m_1\leq m$, $\eta_0=0$, and~$\eta_k$ increasing, such that
\begin{equation}\label{eq:gpiece}
\forall \eta \in [\eta_k,\eta_{k+1}]: \quad g(\eta)= (j_k-\ell)\eta + 2\sigma_{j_k},
\end{equation}
where we formally set~$\eta_{m_1+1}=\infty$.

We have the following properties on the sequences~$j_k$ and~$\sigma_{j_k}$:
\begin{itemize}
\item it holds~$j_0=\min\{j: \ a_j\neq0, \sigma_j=0\}$ and~$j_{m_1}=\min\{j: \ a_j\neq0\}$
\item the sequence~$j_k$ is decreasing; this is a consequence of the fact that the slope of the line $(j-\ell)\eta + 2\sigma_j$ is increasing with respect to~$j$;
\item the sequence~$\sigma_{j_k}$ is increasing. Indeed, assume by contradiction that~$\sigma_{j_{k+1}}\leq \sigma_{j_k}$; then, using that~$j_k$ is decreasing, it follows that
\[ (j_k-\ell)\eta + 2\sigma_{j_k} > (j_{k+1}-\ell)\eta + 2\sigma_{j_{k+1}}\geq g(\eta), \quad \forall \eta>0.\]
This gives the contradiction, since $g(\eta)$ could never assume the value~$(j_k-\ell)\eta + 2\sigma_{j_k}$.
\end{itemize}
\end{Remark}

Thanks to the representation of the function~$g(\eta)$ given in Remark~\ref{Remark:1.9}, we may easily find a sufficient and necessary condition to get~$p_c=\infty$ in~\eqref{eq:pcritical}.

\begin{Remark}\label{Rem:pinfty}
The critical exponent~$p_c$ in~\eqref{eq:pcritical} is~$\infty$
(that is, we have no global-in-time solutions for any power nonlinearity~$p>1$) if, and only if, there exists~$\eta\in[0,\infty)$ such that $g(\eta)-\eta=n$ (since $g(\eta)-\eta$ is a continuous function).

If~$j_{m_1}\geq\ell+2$, we find~$p_c=\infty$, since~$g(\eta)-\eta\nearrow\infty$ as~$\eta\nearrow\infty$.

If~$j_{m_1}=\ell+1$, we find that
\[ h(\infty)= \lim_{\eta\to\infty} \frac{n+\eta}{(n+\eta-g(\eta))_+} = \infty, \]
so that~$p_c=\infty$ as well, due to~$g(\eta)=\eta-2\sigma_{j_m}$ for any~$\eta\in[j_m,\infty]$.

Assume now that~$j_{m_1}\leq \ell$. Thanks to the representation of~$g$ provided by Remark~\ref{Remark:1.9}, the function~$g(\eta)-\eta$ is increasing in~$[0,\eta_{\bar k}]$, and nonincreasing in~$[\eta_{\bar k},\infty]$, where
\[ \bar k = \min \{ k: \ j_k\leq \ell+1\}. \]
As a consequence, $p_c=\infty$ if, and only if,
\[ 2\sigma_{j_{\bar k}} \geq n + (\ell+1-j_{\bar k})\eta_{\bar k}. \]
\end{Remark}

Thanks to the representation of the function~$g(\eta)$ given in Remark~\ref{Remark:1.9}, we may easily study the monotonicity of the function~$h(\eta)$ in each interval of type~$[\eta_k,\eta_{k+1}]$.

\begin{Remark}\label{rem:mainpc}
We assume that~$p_c<\infty$, since the case~$p_c=\infty$ is already discussed in Remark~\ref{Rem:pinfty}. As a consequence, we may replace:
\[ h(\eta)=\frac{n+\eta}{n+\eta-g(\eta)} = 1 + \frac{g(\eta)}{n+\eta-g(\eta)},\]
for any~$\eta\in[0,\infty)$, since the denominator is positive. We first notice that~$h(0)=1$, since~$\sigma_{j_0}=0$, and
\[ h(\infty)=\lim_{\eta\to\infty} \frac{n+\eta}{n+\eta-g(\eta)}= \frac{1}{\ell+1-j_{m_1}} \leq1, \]
due to~$j_{m_1}\leq\ell$.

In order to determine $p_c$, 
we may differentiate~$h$ with respect to~$\eta$, for any~$\eta\neq \eta_k$, where $\eta_k$ is given in Remark \ref{Remark:1.9}. We get:
\[ h'(\eta) = \frac{-g(\eta)+(n+\eta)g'(\eta)}{(n+\eta-g(\eta))^2}. \]
The monotone behavior of~$h$ is determined by the sign of~$-g(\eta)+(n+\eta)g'(\eta)$. However, in any interval of type~$(\eta_k,\eta_{k+1})$, the sign is obtained by formula~\eqref{eq:gpiece}, that is,
\[ \forall \eta \in(\eta_k,\eta_{k+1}): \quad -g(\eta)+(n+\eta)g'(\eta) = -2\sigma_{j_k} +n(j_k-\ell). \]
So the sign of~$h'(\eta)$ is constant in every interval~$(\eta_k,\eta_{k+1})$ and is given by
\[ \forall \eta \in(\eta_k,\eta_{k+1}): \quad \sign h'(\eta) = s_k \doteq \sign (-2\sigma_{j_k} +n(j_k-\ell)). \]
Due to the fact that~$j_k$ is decreasing and~$\sigma_{j_k}$ is increasing, we find that~$s_k$ is a decreasing function.

Recalling that~$\sigma_{j_0}=0$, we get~$s_0=1$ if~$j_0\geq\ell+1$ and~$s_0\leq0$ if~$j_0\leq \ell$. In this latter case, $p_c=h(0)= 1$, and we get no result of nonexistence. Therefore, in the following we assume that~$j_0\geq\ell+1$.

On the other hand, $s_{m_1}=-1$ as a consequence of~$j_{m_1}\leq\ell$ (since we assumed~$p_c<\infty$, see Remark~\ref{Rem:pinfty}).

Moreover, we may exclude the case where~$s_k=0$ for some~$k$.
Indeed, if~$s_k=0$, then $2\sigma_{j_k}=n(j_k-\ell)$.
Since $\sigma_{j_k} \geq 0$, the estimate $j_k \geq \ell$ holds.
\eqref{eq:gpiece} implies that we have
\[ g(\eta_{k})= (j_k-\ell)(n+\eta_{k}), \]
so that
\[ h(\eta_{k}) = \frac1{\ell+1-j_k}=\begin{cases}
1 & \text{if~$j_k \geq \ell + 1$,}\\
\infty & \text{if~$j_k=\ell+1$.}
\end{cases} \]
The both cases contradict our assumptions, since $1=h(0)<h(\eta_k)\leq p_c<\infty$, so there is no~$k$ such that~$s_k=0$.

Therefore, there exists a unique~$k$ such that~$s_{k-1}=1$ and~$s_k=-1$; then~$h$ is increasing in~$[0,\eta_k]$ and decreasing in~$[\eta_{k},\infty]$, and the critical exponent is
\[ p_c = h(\eta_k). \]
Moreover, due to~$g(\eta_k)<n+\eta_k$ (since~$p_c<\infty$) and $s_k=-1$, we find the chain of inequalities
\[ n+\eta_k > g(\eta_k) = (j_k-\ell)\eta_k +2\sigma_{j_k} > (n+\eta_k)(j_k-\ell), \]
which gives~$j_k-\ell<1$, that is, $j_k\leq \ell$.
\end{Remark}

\begin{Remark}
Assume that~$a_0=\ldots=a_{k-1}=0$, for some~$k\geq1$, and~$a_k\neq0$ in~\eqref{eq:L}. As a consequence, $j_{m_1}=k$. We may distinguish two cases. If~$\ell\leq k-1$, then~$p_c=\infty$ due to~$j_{m_1}=k\geq \ell+1$ (see Remark~\ref{Rem:pinfty}). Assume now that~$k\leq \ell$. In this case, we may define~$w=\partial_t^ku$, and reduce the original problem~\eqref{eq:CP} to a problem of order~$m-k$ with power nonlinearity~$|\partial_t^{\ell-k} w|^p$. Indeed, Cauchy problem~\eqref{eq:CP} now reads as
\[
\begin{cases}
\sum_{j=0}^{m-k} a_{j+k} A_{j+k}\partial_t^j w = |\partial_t^{\ell-k} w|^p, & t>0, \ x\in\R^n,\\
\partial_t^j w(0,x)= u_{j+k}(x), & j=0,\ldots,m-1-k.
\end{cases}
\]
For this reason, it is not restrictive to assume~$a_0\neq0$ in meaningful examples, as the ones collected in Section~\ref{sec:examples}.
\end{Remark}

\begin{Remark}\label{rem:principalpart}
Assume that~$p_c\in(1,\infty)$ and let $\bar\eta$ satisfy $p_c = h(\bar\eta)$.
Then we define
	\[
	J_p = \{ j:\quad a_j\neq0, \ \, g(\bar\eta) = (j-\ell) \bar\eta + 2 \sigma_j \}.
	\]
and say that
	\[
	L_p = \sum_{j \in J_p} \partial_t^j A_j
	\]
is the principal part of $L$. Since there exists~$\eta_k$ such that~$\bar\eta=\eta_k$, it follows that~$j_{k-1},j_k\in J_p$, namely, the principal part of~$L$ contains at least two terms of~$L$: $\partial_t^{j_{k-1}}A_{j_{k-1}}$ and~$\partial_t^{j_{k}}A_{j_{k}}$.

In particular, thanks to Remark \ref{rem:mainpc}, which implies that~$j_k\leq \ell$, at least one index in~$\{0,\ldots,\ell\}$ belongs to~$J_p$. In the special case~$\ell=0$, this means that~$A_0$ belongs to the principal part~$L_p$ of~$L$.

We may say that~$L_p$ is a quasi-homogeneous operator (of type~$(g(\bar\eta)+\ell\bar\eta,\bar\eta,1)$), in analogy to Definition~2.2 in~\cite{DAL03} (see also~\cite{Gru,Matsuzawa}). Indeed, our critical exponent~$p_c$ is consistent with the one defined in~\cite{DAL03}.
\end{Remark}

\section{Proof of Theorem~\ref{thm:main}}\label{sec:proof}
We assume that $p \leq p_c $ and we show that if $u$ is a global-in-time weak solution to~\eqref{eq:CP},
according to Definition~\ref{def:weak}, then~$u\equiv0$.
Due to the initial conditions, it is sufficient to show that the function~$v=\partial_t^\ell u$ is identically zero. Indeed, if this is true, then
\[ u(t,\cdot) =c_0 + c_1 t + \ldots + c_{\ell-1} t^{\ell-1} \]
for any~$t\geq0$, for some~$c_0,\ldots,c_{\ell-1}\in\R$. Imposing the initial conditions, we derive $c_0=\ldots=c_{\ell-1}=0$, so that~$u$ is identically zero.

Let us prove that~$v \equiv 0$. Recalling that~$u$ is a global-in-time weak solution to~\eqref{eq:CP}, according to Definition~\ref{def:weak}, we fix suitable test functions~$\psi$ and~$\varphi$, depending on a parameter~$R\gg1$, on which we test the integral equality in~\eqref{eq:1.4}.

Let $\chi$ be a smooth decreasing function satisfying $\chi(t)=1$ for any $0 \leq t \leq 1/2$ and $\chi(1)=0$, and fix~$\psi(t)=(\chi(t))^{mp'}$. Let $\eta \geq 0$. For any~$R\gg1$, we define
\[ \psi_R (t) = \psi (R^{-\eta}t), \qquad \varphi_R(x)=\<R^{-1}x\>^{-q}.\]
We remark that~$\varphi_R\in\mathcal C_q^\infty$, thanks to Corollary~\ref{cor:test}. We also preliminarily notice that
\[ (A_j\varphi_R)(x) = R^{-2\sigma_j}\,(A_j\varphi)(R^{-1}x).\]
Recalling that~$\partial_t^\ell u=v$, the integral equality in~\eqref{eq:1.4} reads as
\begin{align*}
&\int_0^\infty \psi_R(t)\,\int_{\R^n}|v(t,x)|^p\,\varphi_R(x)\,dx\,dt\\
& \qquad = \sum_{j=0}^{m} (-1)^{(j-\ell)} \int_0^\infty \psi_R^{(j-\ell)}(t)\,\int_{\R^n} v(t,x)\,A_j\varphi_R(x)\,dx\,dt\\
& \qquad \qquad - \sum_{j=\ell}^{m-1} \int_{\R^n}u_{j}(x)\,A_{j+1}\varphi_R(x)\,dx.
\end{align*}
At first, we obtain that the identity
	\begin{align*}
	&\lim_{R \to \infty}
	\sum_{j=\ell}^{m-1} \int_{\mathbb R^n} u_{j}(x) (A_{j+1} \varphi_R) (x) dx\\
	&=
	\sum_{j=\ell}^{m-1} \lim_{R \to \infty}
	R^{-2\sigma_{j+1}} \int_{\mathbb R^n} u_{j}(x) (A_{j+1} \varphi) (R^{-1} x) dx\\
	&=
	\sum_{j \in I} a_j \int_{\mathbb R^n} u_{j}(x)dx
	\end{align*}
follows from the Lebesgue dominant convergence theorem, due to~$u_j\in L^1(\<x\>^{q}dx)$ and~$A_{j+1}\varphi\in L^\infty(\<x\>^{-q}dx)$. In the last equality, we used that~$A_{j+1}\varphi =a_{j+1}\varphi$, if~$j\in I$. Due to the sign assumption~\eqref{eq:sign}, the latter term in the identity is strictly positive.

As a consequence of the previous identity and of the sign assumption~\eqref{eq:sign}, we obtain the inequality
	\begin{align}
	&\limsup_{R\to \infty} \sum_{j=0}^{m} \int_0^\infty \int_{\R^n}
	v(t,x)\,(-1)^{j-\ell} \psi_R^{(j-\ell)}(t) (A_{j} \varphi_R) (x) \,dx\,dt
	\nonumber\\
	&> \limsup_{R\to \infty} \int_0^\infty \int_{\R^n}|v(t,x)|^p \,\psi_R(t)\,\varphi_R(x)\,dx\,dt
    \nonumber\\
	\label{eq:3.2}
    & = \int_0^\infty \int_{\R^n}|v(t,x)|^p\,dx\,dt.
	\end{align}
The latter equality in~\eqref{eq:3.2} has to be understood in the sense of an integral of a nonnegative function, which may be a nonnegative number or~$\infty$. Indeed, by Beppo-Levi monotone convergence theorem for nonnegative increasing sequences, we find the limit as a consequence of $\psi_R(t)\,\varphi_R(x)\nearrow 1$ as~$R\nearrow\infty$.

Corollary~\ref{cor:test} implies that
	\begin{align}
	|(A_{j} \varphi_R) (x)|=R^{-2\sigma_j}|(A_j\varphi)(R^{-1}x)|
	\leq C R^{-2 \sigma_j} \varphi(R^{-1}x)=C R^{-2 \sigma_j}\varphi_R(x).
	\label{eee}
	\end{align}
We also assert that
	\begin{align}
	| \psi_R^{(j-\ell)} (t)|
	\leq C R^{-(j-\ell) \eta} \psi_R(t)^{1/p}.
	\label{aaa}
	\end{align}
Indeed, when $j-\ell \geq 0$, \eqref{aaa} is directly computed:
\begin{align*}
| \psi_R^{(j-\ell)} (t)|
    & =R^{-(j-\ell) \eta} | (\chi^{mp'})^{(j-\ell)}|(R^{-\eta}t) \\
    & \leq C R^{-(j-\ell) \eta} \chi^{m(p'-1)}(R^{-\eta}t)\\
    & = C R^{-(j-\ell) \eta} \psi_R(t)^{1/p}.
\end{align*}
When $j-\ell < 0$, since
	\[
	\psi_R^{(-1)}(t)
	= - \int_t^\infty \psi (R^{-\eta} \tau ) d \tau
	= R^{\eta} \psi^{(-1)}(R^{-\eta} t),
	\]
the identity
	\begin{align}
	\psi_R^{(j-\ell)}(t)
	= R^{ - (j-\ell) \eta} \psi^{(j-\ell)}(R^{-\eta} t),
	\label{bbb}
	\end{align}
is shown inductively.
Since for any~$t\in[0,1]$, we have
	\[
	|\psi^{(-1)}(t)|
	= \bigg| \int_t^\infty \psi (\tau ) d \tau \bigg|
	\leq \psi (t ) \bigg| \int_t^1 d \tau \bigg|
	\leq \psi (t )
	\]
because $\psi (t) = 0$ for any $t \geq 1$,
the estimate
	\begin{align}
	|\psi^{(j-\ell)}(t)|
	\leq \psi (t )
	\label{ccc}
	\end{align}
holds if $j-\ell < 0$. Estimates \eqref{bbb} and \eqref{ccc} imply that \eqref{aaa} holds also when $j - \ell < 0$.

Then \eqref{eee}, \eqref{aaa}, and the H\"older inequality imply that we have
	\begin{align}
	&\bigg| \sum_{j=0}^{m} (-1)^{j-\ell} \int_0^\infty \int_{\R^n}v(t,x)
	\, \psi_R^{(j-\ell)}(t)\, (A_{j} \varphi_R) (x) \, dx\,dt \bigg|
	\nonumber \\
	&\leq C R^{-(j-\ell) \eta - 2 \sigma_j + (n+\eta)/p'}
	\bigg( \int_0^\infty \int_{\R^n}|v(t,x)|^p \,\psi_R(t)\varphi_R(x)\,dx\,dt \bigg)^{1/p}.
	\label{eq:3.3}
	\end{align}
Here we just used that
\[ \Big(\int_0^{R^\eta} \int_{\R^n} \<R^{-1}x\>^{-qp'} \,dx \,dt \Big)^{p'}
= C\,R^{\frac{\eta+n}{p'}}. \]
Recalling the definition of~$g(\eta)$ and~$h(\eta)$ in Definition~\ref{def:homdim}, we get
	\begin{align*}
	-( (j-\ell) \eta + 2 \sigma_j ) p' + (n+\eta)
	&\leq - g(\eta) \frac{p}{p-1} + (n+\eta)\\
	&\leq \frac{1}{p-1} \bigg( (n+\eta- g(\eta)) p - (n+\eta) \bigg)\\
	&\begin{cases}
	< 0
	&\mathrm{if} \quad n+\eta- g(\eta) \leq 0,\\
	= \frac{(n+\eta- g(\eta)}{p-1} (p - h(\eta))
	&\mathrm{if} \quad n+\eta- g(\eta) > 0.
	\end{cases}
	\end{align*}
Therefore, if~$p<p_c=\sup_{\eta\geq0}h(\eta)$, there exists some $\eta \geq 0$ such that
	\[
	-( (j-\ell) \eta + 2 \sigma_j ) p' + (n+\eta)
	< 0.
	\]
This inequality, \eqref{eq:3.2}, \eqref{eq:3.3}, and the monotone convergence theorem imply that
	\[
	\int_0^\infty \int_{\R^n}|v(t,x)|^p \,dx\,dt
	= \lim_{R \to 0} \int_0^\infty \int_{\R^n}|v(t,x)|^p \,\psi_R(t) \phi_R(x) \,dx\,dt
	\to 0.
	\]
As a consequence, $v\equiv0$, which is impossible, as a consequence of the sign condition~\eqref{eq:sign}, which implies non-trivial data.


Now let~$p=p_c$. By~\eqref{eq:3.2} we derive
\begin{align*}
& \int_0^\infty \int_{\R^n}|v(t,x)|^{p_c}\,dx\,dt \\
& \qquad < \limsup_{R\to \infty} \sum_{j=0}^{m} \int_0^\infty \int_{\R^n} v(t,x)\,(-1)^{j-\ell} \psi_R^{(j-\ell)}(t) (A_{j} \varphi_R) (x) \,dx\,dt <\infty,
\end{align*}
that is, $v\in L^{p_c}([0,\infty)\times\R^n)$.

We now repeat the reasoning for the subcritical case~$p\in(1,p_c)$, but we replace the test function~$\varphi_R$ by the test function~$\varphi_{RK}=\<R^{-1}K^{-1}x\>^{-q}$, for a given constant~$K\gg1$.

By the fact that~$v\in L^{p_c}([0,\infty)\times\R^n)$, the dominated convergence theorem gives us
\[ \int_0^\infty \int_{\R^n}|v(t,x)|^{p_c} \,dx\,dt = \lim_{R \to \infty} \int_0^\infty \int_{\R^n}|v(t,x)|^{p_c} \,\psi_{R}(t)\,\varphi_{RK}(x)\,dx\,dt.\]
On the other hand, we notice that
\[ (\partial_t^{j-\ell}\psi)(R^{-\eta}t)\to 0, \ \text{for any~$j=\ell+1,\ldots,m-1$,} \]
pointwisely, as~$R\to\infty$. As a consequence, if we define
\[ I_j(R)=\int_0^\infty \int_{\R^n}|v(t,x)|^{p_c}
	\frac{|\psi^{(j-\ell)}(R^{-1}t) (A_{j} \<\cdot\>^{-q}) (R^{-1}K^{-1}x)|^{p_c}}{\psi_R(t)\varphi_{RK}(x)} dx \, dt, \]
then
\[ \lim_{R\to\infty} I_j(R) = \begin{cases}
0 & \text{if~$j\geq\ell+1$,} \\
B_j & \text{if~$j=0,\ldots,\ell$,}
\end{cases} \]
where
\[ B_j=|\psi^{(j-\ell)}(0) (A_{j} \<\cdot\>^{-q}) (0)|^{p_c}\,\int_0^\infty \int_{\R^n}|v(t,x)|^{p_c} \,dx\,dt, \]
is independent of~$K$ (we notice that~$|\psi^{(j-\ell)}(0)|\leq1$, whereas the explicit value of~$(A_{j} \<\cdot\>^{-q}) (0)$ is computed in \eqref{eq:der0}).
Since~$u$ is a weak solution, it holds
\begin{align*}
& \int_0^\infty \int_{\R^n}|v(t,x)|^{p_c} \,dx\,dt + \sum_{j \in I} a_j \int_{\mathbb R^n} u_{j}(x)dx \\
	&\qquad \leq \limsup_{R\to \infty} \sum_{j=0}^{m} \int_0^\infty \int_{\R^n}
	v(t,x)\,(-1)^{j-\ell} \psi_R^{(j-\ell)}(t) (A_{j} \varphi_{RK}) (x) \,dx\,dt.
\end{align*}
By the sign assumption~\eqref{eq:sign} and using~$p=p_c$, we deduce a contradiction, since
\begin{align*}
0   & < \sum_{j \in I} a_j \int_{\mathbb R^n} u_{j}(x)dx \\
    & \leq \int_0^\infty \int_{\R^n}|v(t,x)|^{p_c} \,dx\,dt + \sum_{j \in I} a_j \int_{\mathbb R^n} u_{j}(x)dx\\
	&\qquad \leq C \sum_{j=0}^{m} K^{-2\sigma_j+\frac{n}{p'}}\, \limsup_{R\to \infty} I_j(R)^{\frac1p} \\
    & \qquad = C \sum_{j=0}^{\ell} K^{-2\sigma_j+\frac{n}{p'}}\, B_j^{\frac1p} \leq C_1\,\sum_{j=0}^\ell K^{-2\sigma_j+\frac{n}{p'}},
\end{align*}
and the quantity~$K^{-2\sigma_j+\frac{n}{p'}}$ is arbitrarily small for any~$j=0,\ldots,\ell$, for a sufficient large~$K\gg1$. Indeed, the sign property~$-2\sigma_j+n/p'<0$ is a consequence of the fact that, for any~$j\leq\ell$, it holds
\[ -2\sigma_j+\frac{n}{p'} < -2\sigma_j+\frac{n}{p'} + \eta (\ell-j+1/p') \leq 0. \]
This concludes the proof.

\begin{Remark}
The history of the approach with the second parameter $K$ in the critical case when $p=p_c$
can go back at least to \cite{FK10}.
\end{Remark}

\begin{Remark}
In the critical case where $p=p_c$,
an alternative proof of Theorem~\ref{thm:main}, which does not rely on the second parameter~$K$, may work
under a certain condition.
Indeed,
when $\sigma_j$ is integer for any $j \geq 0$,
one may deploy the classical test function method, which does not rely on the second parameter,
with a test function $\Phi$ satisfying that $\partial_t^j A_j \Phi(0,0) = 0$ for any $j \geq 0$.
This approach does not seem applicable to our case in general
because such a positive function is not known when some $\sigma_j$ are not integer.
However
\eqref{eq:der0} implies that
for any $q > n$ and $\sigma, \varepsilon > 0$,
there exists $\theta_{\sigma,q,\varepsilon} \in (0,1)$ satisfying
	\[
	(-\Delta)^{\sigma} ( \langle \cdot \rangle^{-q} - \theta_{\sigma,q,\varepsilon} \langle \cdot \rangle^{-q-\varepsilon})(0) = 0.
	\]
Therefore,
the classical test function method may work
with
	\[
	\varphi(x) = \langle x \rangle^{-q} - \theta_{\sigma,q,\varepsilon} \langle x \rangle^{-q-\varepsilon}
	\]
provided that the main part~$L_p$ of the operator~$L$, defined in Remark~\ref{rem:principalpart},
contains only one term~$A_{\bar \jmath}$, with~$\bar \jmath\leq\ell$ and $\sigma_{\bar \jmath}$ is not integer.
Namely, assume that~$J_p$ contains exactly one index~$\bar \jmath$. We mention that this is always the case if~$\ell=0$.
\end{Remark}

\section{Examples}\label{sec:examples}

Here we present examples of equations for which we explicitly compute the critical exponent provided by Theorem~\ref{thm:main}, also employing the properties discussed in Section~\ref{sec:critical}. Every time we say that~$p_c$ is the critical exponent, we imply that Theorem~\ref{thm:main} may be used to prove nonexistence of global weak solutions for~$1<p\leq p_c$. 

\begin{Example}\label{ex:heat}
We consider the fractional heat equation,
\[
Lu = u_{t} + (-\Delta)^\sigma u = |u|^p,
\]
that is, $m=1$, $\ell=0$, and~$\sigma_0=\sigma>0$. Then
\[ g(\eta)=\min\{\eta,2\sigma\}=\begin{cases}
\eta & \text{if~$\eta\in[0,2\sigma]$,}\\
2\sigma & \text{if~$\eta\in[2\sigma,\infty]$.}
\end{cases} \]
As a consequence, $\eta_1 = 2\sigma$ and
\[ p_c  = h(2 \sigma) = 1 + \frac{2\sigma}{n}.\]
In particular, if $\sigma=1$, $p_c$ coincides with the well-known Fujita exponent $1 + \frac 2 n$ (\cite{F66, H73, KST77}).
By standard methods,
it is easy to show that global existence of small data solutions holds for supercritical powers~$p>p_c$.
For details, see \cite{FK10, S75} and reference therein.
\end{Example}

\begin{Example}\label{ex:Ebert}
We consider the fractional $\sigma$-evolution equation of second order with power nonlinearity $|u|^p$,
\[
Lu = u_{tt} + (-\Delta)^\sigma u = |u|^p,
\]
that is, $m=2$, $\ell=0$, $a_1=0$ and $\sigma_0=\sigma>0$. Then
\[ g(\eta)=\min\{2\eta,2\sigma\}=\begin{cases}
2\eta & \text{if~$\eta\in[0,\sigma]$,}\\
2\sigma & \text{if~$\eta\in[\sigma,\infty]$.}
\end{cases} \]
As a consequence, $\eta_1 = \sigma$ and
\[ p_c  = h(\sigma) = 1 + \frac{2\sigma}{n-\sigma},\]
provided that~$\sigma<n$. For~$\sigma>1$, the global existence of small data solutions for supercritical powers~$p>p_c$ in low space dimension has been recently proved in~\cite{EL}.
\end{Example}

\begin{Example}\label{ex:damp}
We consider a damped fractional $\sigma$-evolution equation of second order with power nonlinearity $|u|^p$,
\[ Lu = u_{tt} + (-\Delta)^{\sigma_1} u_t + (-\Delta)^{\sigma} u = |u|^p, \]
where~$\sigma_0=\sigma>0$ and~$\sigma_1\geq0$. Then $m=2$ and $\ell=0$. We compute
\[ g(\eta) = \min \{ 2\eta, \ \eta+2\sigma_1, 2\sigma\}. \]
We shall distinguish three cases.

\bigskip

If~$\sigma_1=0$, the damping is called classical, exterior, or weak, and
\[ g(\eta) = \begin{cases}
\eta & \text{if~$\eta\in[0,2\sigma]$,}\\
2\sigma & \text{if~$\eta\in[2\sigma,\infty]$.}
\end{cases} \]
As a consequence, $\eta_1=2\sigma$ and
\[ p_c  = h(2 \sigma) = 1 + \frac{2\sigma}{n}, \]
as in Example~\ref{ex:heat}. Indeed, the principal part of the operator is~$L_p=\partial_t+(-\Delta)^\sigma$, the fractional heat operator. The diffusion phenomenon for this model, that is, the solution to~$Lu=0$ asymptotically behaves as the solution to~$L_pu=0$ for a suitable choice of initial data, has been investigated in~\cite{K00} for fractional powers (for the integer case, we refer to~\cite{HM, HT, MN03, N03}).

\bigskip

If~$0<\sigma_1<\sigma/2$, the damping is called structural and effective, and
\[ g(\eta)=\begin{cases}
2\eta & \text{if~$\eta< 2\sigma_1$,}\\
\eta+2\sigma_1 & \text{if~$2\sigma_1<\eta< 2(\sigma-\sigma_1)$}\\
2\sigma & \text{if~$\eta>2(\sigma-\sigma_1)$.}
\end{cases} \]
Then $\eta_1=2\sigma_1$, $\eta_2=2(\sigma-\sigma_1)$, and the critical exponent is
\[ p_c = h(2(\sigma-\sigma_1)) = 1 + \frac{2\sigma}{n-2\sigma_1}, \]
provided that~$2\sigma_1<n$. The global existence of small data solutions for supercritical powers~$p>p_c$, in low space dimension, has been proved in a series of papers~\cite{DAE14NA, DAE17, DAR14}. The principal part of the operator is~$L_p=(-\Delta)^{\sigma_1}\partial_t+(-\Delta)^\sigma$. Indeed, a diffusion phenomenon also holds for this model~\cite{DAE14JDE}, similarly to the case~$\sigma_1=0$.

\bigskip

If~$2\sigma_1>\sigma$, the damping is called structural and noneffective, and
\[ g(\eta)=\begin{cases}
2\eta & \text{if~$\eta< \sigma$,}\\
2\sigma & \text{if~$\eta>\sigma$.}
\end{cases} \]
In such a case, $\eta_1=\sigma$ and
\[ p_c = h(\sigma)=1+\frac{2\sigma}{n-\sigma}, \]
provided that~$\sigma<n$, as in Example~\ref{ex:Ebert}.  The global existence of small data solutions for supercritical powers~$p>p_c$, in low space dimension, is proved in~\cite{DAE20}.

The adjective ``noneffective'' for the damping hints to the fact that the principal part of the operator, $L_p=\partial_{tt}+(-\Delta)^\sigma$, does not contain the damping (see also the classification introduced in~\cite{DAE16}).

In the limit case~$2\sigma_1=\sigma$, the critical exponent is the same as in the effective and noneffective case, but the operator~$L$ is quasi-homogeneous, that is, $L_p=L$.
\end{Example}

\begin{Example}\label{ex:dampt}
We consider a damped wave equation, as in Example~\ref{ex:damp}, but with nonlinearity $|u_t|^p$, that is,
\[ Lu= u_{tt} + (-\Delta)^{\sigma_1} u_t + (-\Delta)^\sigma u = |u_t|^p. \]
Then $m=2$, $\ell=1$, $\sigma_1\geq0$ and~$\sigma_0=\sigma>0$. We compute
\[ g(\eta) = \min \{ \eta, \ 2\sigma_1, -\eta+2\sigma\}. \]
If~$\sigma_1=0$ (classical damping), then~$j_0=1$ and~$\eta_1=2\sigma$. Due to~$g(\eta_1)=0$, we do not have a nonexistence result. So, let~$\sigma_1>0$, that is, we consider a structural damping. We distinguish two cases.

\bigskip

If the damping is effective, that is, $2\sigma_1<\sigma$, then
\[ g(\eta)=\begin{cases}
\eta & \text{if~$\eta< 2\sigma_1$,}\\
2\sigma_1 & \text{if~$2\sigma_1<\eta, 2(\sigma-\sigma_1)$}\\
-\eta+2\sigma & \text{if~$\eta>2(\sigma-\sigma_1)$.}
\end{cases} \]
As a consequence, $\eta_1=2\sigma_1$, $\eta_2=2(\sigma-\sigma_1)$, and
\[ s_0=1, \quad s_1=s_2=-1, \]
so that
\[ p_c = h(2\sigma_1)=1+\frac{2\sigma_1}{n}. \]
The global existence of small data solutions for supercritical powers~$p>p_c$, in low space dimension, is proved in~\cite{DAE17}. The principal part of the operator is~$L_p=\partial_t^2+(-\Delta)^{\sigma_1}\partial_t$. Indeed, a diffusion phenomenon also holds for this model~\cite{DAE14JDE}. Indeed, the problem for~$L_pu=|u_t|^p$ may be reduced to the problem for the fractional heat equation~$\partial_tv+(-\Delta)^{\sigma_1}v=|v|^p$, treated in Example~\ref{ex:heat}, setting~$v=u_t$. Therefore, it is a natural outcome that the critical exponent is the same for the two problems.


\bigskip

If the damping is noneffective, that is, $2\sigma_1>\sigma$, then
\[ g(\eta)=\begin{cases}
\eta & \text{if~$\eta< \sigma$,}\\
-\eta+2\sigma & \text{if~$\eta>\sigma$.}
\end{cases} \]
In such a case, $\eta_1=\sigma$ is the best scaling and
\[ p_c = h(\sigma)=1+\frac{\sigma}{n}. \]
The global existence of small data solutions for supercritical powers~$p>p_c$, in low space dimension, is proved in~\cite{DAE20}. The principal part of the operator is~$L_p=\partial_t^2+(-\Delta)^{\sigma}$ and it does not contain the damping term.
\end{Example}

\begin{Example}
Let us consider an operator~$L$ as in~\eqref{eq:L} and assume that~$p_c=h(\eta)$ for some~$\eta\in(0,\infty)$. Then its principal part~$L_p$ is a quasi-homogeneous operator of order~$m_p\leq m$, in the sense that:
\begin{equation}\label{eq:quasihom}
L_pu= \sum_{j=0}^{m_p} b_j (-\Delta)^{\sigma_{m_p} + (m_p-j)\theta}\partial_t^j u,
\end{equation}
where~$\sigma_{m_p}\geq0$ and $\theta=1/(2\eta)$. Here~$b_j\in\R$, with~$b_{m_p}\neq0$ and at least one among~$b_0,\ldots,b_{m_p-1}$ is nonzero.

\bigskip

On the other hand, if~$L=L_p$ is a quasi-homogeneous operator in the form~\eqref{eq:quasihom}, for some~$\theta>0$, and we consider problem~\eqref{eq:CP}, then
\[ g(\eta)=2\sigma_{m_p}+(m_p-\ell)\,\min\{\eta,2\theta\}.\]
In particular, setting~$\sigma=\sigma_\ell=\sigma_{m_p}+(m_p-\ell)\theta$, we obtain that
\[ p_c = h(2\theta) = 1 + \frac{2\sigma}{n+2\theta -2\sigma}, \]
provided that~$2\sigma<n+2\theta$.
\end{Example}

\section{The case of integer powers}\label{sec:integer}

For the ease of reading, we collect in this Section the basics of the classical test function method, applied in the case in which the critical exponent is determined via the analogous of Definition~\ref{def:homdim}, in space dimension~$n=1$. The reason to fix space dimension~$n=1$ is that in higher space dimension, a differential operator could be not homogeneous in space, in general, so the critical exponent requires different calculations to be computed in space dimension~$n\geq2$. However, if all the derivatives~$\partial_x^{r_j}$ are replaced by~$(-\Delta)^{\frac{r_j}2}$, with even integer~$r_j$, in~\eqref{eq:Lint}, then our result remains valid also in higher space dimension~$n\geq2$.

\bigskip

Let~$L$ be an operator of order~$m$ in the time variable,
\begin{equation}\label{eq:Lint}
L= \partial_t^m + \sum_{j=0}^{m-1} a_j \partial_x^{r_j} \partial_t^{j},
\end{equation}
where~$r_j$ is an integer number. Consistently with Definition~\ref{def:homdim}, we now put
\[
g(\eta)= \min_{j=0,\ldots,m, a_j \neq 0} \{ (j-\ell) \eta + r_j \}.
\]
for any~$\eta\in[0,\infty]$, and we define~$p_c$ as in~\eqref{eq:pcritical}, the critical exponent for~\eqref{eq:CP}. Due to the fact that now the derivatives are integer, we may rely on a more classical definition of weak solution.
\begin{Definition}\label{def:weakint}
Let~$L$ be as in~\eqref{eq:Lint}. Assume that the initial data in~\eqref{eq:CP} verify the assumption
\[ \text{$u_j=0$ if~$j=0,\ldots,\ell-1$ and~$u_j\in L^1_\lloc(\R)$ if $j \geq \ell$.}\]
We fix~$T\in(0,\infty]$. We say that~$u \in W^{\ell,p}_\lloc\big([0,T),L^p_\lloc(\R))\big)$ is a weak solution to~\eqref{eq:CP} if~$\partial_t^ju(0,\cdot)=0$ for any~$j\leq\ell-1$, and for any function~$\psi\in\mathcal C_c^\infty([0,T))$, with~$\psi=1$ in a neighborhood of~$0$ and for any function~$\varphi\in \mathcal C_c^\infty(\R)$, it holds
	\begin{align*}
	&\int_0^T \psi(t)\,\int_{\R}|\partial_t^\ell u(t,x)|^p\,\varphi(x)\,dx\,dt\\
	& \qquad = \sum_{j=0}^{m} (-1)^{(j+r_j-\ell)} \int_0^T \psi^{(j-\ell)}(t)\,\int_{\R} \partial_t^\ell u (t,x)\,\partial_x^{r_j}\varphi(x)\,dx\,dt\\
	& \qquad \qquad - \sum_{j=\ell}^{m-1} (-1)^{r_{j+1}} \int_{\R}u_{j}(x)\,\partial_x^{r_{j+1}}\varphi(x)\,dx,
	\end{align*}
where for $j < 0$, $\psi^{(j)}$ is the compactly supported primitive of~$\psi^{(j+1)}$, 
	\[
	\psi^{(j)}(t) 	= - \int_t^T \psi^{(j+1)}(\tau) d\tau.
	\]
\end{Definition}
It is easy to show that classical solutions~$u\in\mathcal C^\infty([0,\infty)\times\R)$ are weak solutions, integrating by parts. Then we have the following.
%
\begin{Theorem}\label{thm:intmain}
Let~$L$ be as in~\eqref{eq:Lint}, and~$p_c$ be as defined above. We define
\[ I = \{ j \geq \ell: \ r_{j+1}=0,\ a_{j+1} \neq 0\}. \]
We assume that~$u_j=0$ for any~$j\leq \ell-1$, that~$u_j\in L^1_\lloc(\R)$, for any~$j=\ell,\ldots,m-1$, with~$j\not\in I$, and that~$u_j\in L^1(\R)$ for any~$j\in I$. Moreover, we assume the sign condition~\eqref{eq:sign}. If there exists a global-in-time weak solution~$u \in W^{\ell,p}_\lloc\big([0,\infty),L^p_\lloc(\R)\big)$ to~\eqref{eq:CP}, according to Definition~\ref{def:weakint}, then $p>p_c$.
\end{Theorem}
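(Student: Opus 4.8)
The plan is to follow the exact same scheme used in the proof of Theorem~\ref{thm:main}, observing that the passage from fractional to integer powers only simplifies matters: the operator $(-\Delta)^{r_j/2}$ replaced by $\partial_x^{r_j}$ (in dimension $n=1$) is a local operator, so the special polynomially-decaying test functions are no longer needed, and we may use ordinary compactly supported test functions $\varphi\in\mathcal C_c^\infty(\R)$. Concretely, I would fix a smooth decreasing $\chi$ with $\chi=1$ on $[0,1/2]$ and $\chi(1)=0$, set $\psi(t)=\chi(t)^{mp'}$ and, instead of $\langle R^{-1}x\rangle^{-q}$, take $\varphi_R(x)=\varphi(R^{-1}x)$ for a fixed compactly supported $\varphi$ with $\varphi=1$ near $0$. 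Then one has the analogous scaling identities $\psi_R^{(j-\ell)}(t)=R^{-(j-\ell)\eta}\psi^{(j-\ell)}(R^{-\eta}t)$ and $\partial_x^{r_j}\varphi_R(x)=R^{-r_j}(\partial_x^{r_j}\varphi)(R^{-1}x)$, and the pointwise bounds $|\psi_R^{(j-\ell)}(t)|\le C R^{-(j-\ell)\eta}\psi_R(t)^{1/p}$ and $|\partial_x^{r_j}\varphi_R(x)|\le C R^{-r_j}\mathbf 1_{\{|x|\le CR\}}$ — the latter replacing the Corollary~\ref{cor:test} estimate \eqref{eee}, and being trivial since $\varphi$ is smooth with compact support.

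The core argument proceeds exactly as in Section~\ref{sec:proof}. Writing $v=\partial_t^\ell u$, I would first use the dominated convergence theorem (now with $u_j\in L^1(\R)$ for $j\in I$ and $\partial_x^{r_{j+1}}\varphi\in L^\infty$ compactly supported) to get
\[
\lim_{R\to\infty}\sum_{j=\ell}^{m-1}(-1)^{r_{j+1}}\int_\R u_j(x)\,\partial_x^{r_{j+1}}\varphi_R(x)\,dx = \sum_{j\in I}a_{j+1}\int_\R u_j(x)\,dx > 0,
\]
using the sign condition~\eqref{eq:sign}. Combining this with Beppo-Levi (so that $\psi_R(t)\varphi_R(x)\nearrow 1$) gives, as in~\eqref{eq:3.2},
\[
\limsup_{R\to\infty}\sum_{j=0}^{m}(-1)^{j+r_j-\ell}\int_0^\infty\!\!\int_\R v\,\psi_R^{(j-\ell)}\,\partial_x^{r_j}\varphi_R\,dx\,dt
\;>\;\int_0^\infty\!\!\int_\R |v(t,x)|^p\,dx\,dt.
\]
Then Hölder's inequality yields, for each $j$, a bound of the left side by $C R^{-(j-\ell)\eta - r_j + (1+\eta)/p'}$ times $\big(\int\int |v|^p\psi_R\varphi_R\big)^{1/p}$, and the minimization defining $g(\eta)$ converts the exponent into $\tfrac{1}{p-1}\big((1+\eta-g(\eta))p-(1+\eta)\big)$ (here $n=1$), which is negative for $p<h(\eta)$ whenever $1+\eta-g(\eta)>0$, and negative outright when $1+\eta-g(\eta)\le 0$. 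Choosing $\eta$ so that $h(\eta)>p$ (possible since $p\le p_c=\sup_\eta h(\eta)$ would be contradicted) forces $\int\int|v|^p=0$, hence $v\equiv 0$, contradicting the nontriviality of the data implied by~\eqref{eq:sign}. The critical case $p=p_c$ is handled by the same two-parameter device: first deduce $v\in L^{p_c}([0,\infty)\times\R)$, then rerun the subcritical argument with $\varphi_{RK}(x)=\varphi(R^{-1}K^{-1}x)$, let $R\to\infty$ (killing the $j\ge\ell+1$ terms since $\psi^{(j-\ell)}(R^{-\eta}t)\to 0$), and observe the surviving terms carry a factor $K^{-r_j+1/p'}<1$ for $j\le\ell$, which can be made arbitrarily small — a contradiction with the strict positivity of $\sum_{j\in I}a_{j+1}\int u_j$.

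I expect no genuine obstacle here, since every step is a transcription of an argument already carried out in detail for the harder fractional case in Section~\ref{sec:proof}. The only points requiring a word of care are: (i) verifying that $\varphi\in\mathcal C_c^\infty(\R)$ with $\varphi=1$ near the origin is an admissible test function in Definition~\ref{def:weakint} and that all integrals converge (immediate, by compact support and local integrability of $v$ and the $u_j$); (ii) checking the sign bookkeeping with the extra factors $(-1)^{r_j}$ coming from integration by parts in $x$, which are harmless because they only affect signs and the argument rests on absolute values; and (iii) noting that in dimension $n=1$ the operator is automatically homogeneous in space of degree $r_j$, so the function $g$ and critical exponent $p_c$ are well-defined by the same formulas as in Definition~\ref{def:homdim} with $2\sigma_j$ replaced by $r_j$ and $n=1$. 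The remark at the start of Section~\ref{sec:integer} already points out that if the $\partial_x^{r_j}$ are replaced by $(-\Delta)^{r_j/2}$ with even $r_j$, the proof goes through verbatim in any dimension $n\ge 1$.
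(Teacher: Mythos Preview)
Your proposal is correct and matches the paper's proof for the subcritical range $p<p_c$: transcribe the argument of Theorem~\ref{thm:main} with a compactly supported $\varphi\in\mathcal C_c^\infty(\R)$, $\varphi=1$ near the origin, in place of $\langle R^{-1}x\rangle^{-q}$.

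The one difference is in the critical case $p=p_c$. You invoke the two-parameter device with $\varphi_{RK}$, exactly as in the fractional proof, whereas the paper points out that this is unnecessary here. Since $\varphi$ is constant near the origin and the spatial derivatives are now local, one has $(\partial_x^{r_j}\varphi)(R^{-1}x)\to(\partial_x^{r_j}\varphi)(0)=0$ pointwise as $R\to\infty$ for every $j$ with $r_j\geq1$; combined with $\psi^{(j-\ell)}(R^{-\eta}t)\to0$ for $j\geq\ell+1$, \emph{all} terms on the right of the weak formulation vanish in the limit, and dominated convergence together with H\"older yield the contradiction directly, with no second parameter. Your $K$-argument is also valid (the sign condition $-r_j+1/p'<0$ for $j\leq\ell$ follows from the same chain of inequalities used at the end of Section~\ref{sec:proof}, which in particular forces $r_j\geq1$), but the paper's shortcut is precisely the simplification that integer, local derivatives afford---and the reason the $K$-device had to be introduced for the fractional case in the first place, where $(A_j\langle\cdot\rangle^{-q})(0)\neq0$ by~\eqref{eq:der0}.
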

\begin{proof}
The proof of Theorem~\ref{thm:intmain} is a classical application of the test function method, in particular it is completely analogous to the proof of Theorem~\ref{thm:main}, but we now fix~$\varphi(x)=\psi(|x|)$ as a test function in space, where the test function in time~$\psi(t)$ is defined as in the proof of Theorem~\ref{thm:main}.

However, due to the fact that the derivatives are integer, in the critical case~$p=p_c$,
	we may directly deduce that~$v=\partial_t^\ell u$ is identically zero, with no need to use the parameter~$K\gg1$ as in the proof of Theorem~\ref{thm:main}. 
Namely, thanks to
\[ (\partial_t^j\psi)(R^{-\eta}t)\to 0, \ \text{for any~$j=1,\ldots,m-1$, and} \quad \partial_x^{r_0}(R^{-1}x)\to (\partial_x^{r_0}\varphi)(0)=0, \]
pointwisely, as~$R\to\infty$, applying dominated convergence theorem and H\"older inequality.
\end{proof}

\section*{Acknowledgments}
The first author is supported by Grants-in-Aid for JSPS Fellows 19J00334
and Early-Career Scientists 20K14337.
The authors thank Prof. Lorenzo D'Ambrosio for the kind discussion and his useful suggestions.



\end{document}